\documentclass[11pt]{article}
\usepackage{lmodern}
\usepackage[T1]{fontenc}
\usepackage{textcomp}
\usepackage[utf8]{inputenc}
\usepackage{color}
\usepackage{mathtools}
\usepackage{enumitem}
\usepackage{amsmath}
\usepackage{amsthm}
\usepackage{amssymb}
\usepackage{geometry}
\usepackage[authoryear,round]{natbib}
\usepackage{microtype}
\usepackage[bookmarks=false,
 breaklinks=false,pdfborder={0 0 1},backref=section,colorlinks=true]
 {hyperref}
\hypersetup{
 linkcolor=blue,citecolor=blue,urlcolor=blue}

\makeatletter
\@ifundefined{date}{}{\date{}}
\usepackage{bm}\usepackage{booktabs}

\newtheorem{theorem}{Theorem}[section]
\newtheorem{proposition}[theorem]{Proposition}\newtheorem{lemma}[theorem]{Lemma}\newtheorem{corollary}[theorem]{Corollary}\theoremstyle{definition}
\newtheorem{definition}[theorem]{Definition}\newtheorem{assumption}[theorem]{Assumption}\theoremstyle{remark}
\newtheorem{remark}[theorem]{Remark}

\newcommand{\tr}{\mathop{\mathrm{tr}}}
\newcommand{\diag}{\mathop{\mathrm{diag}}}
\newcommand{\argmax}{\mathop{\mathrm{arg\,max}}}

\newcommand{\dist}{\mathop{\mathrm{dist}}}

\newcommand{\KL}{\mathop{\mathrm{KL}}}

\title{Consistency of the Bayesian Information Criterion for Model Selection in Exploratory Factor Analysis}
\author{%
Hien Duy Nguyen$^{1,2}$\thanks{Email: \href{mailto:h.nguyen5@latrobe.edu.au}{h.nguyen5@latrobe.edu.au}.}~ and Kei Hirose$^{1}$\thanks{Email: \href{mailto:hirose@imi.kyushu-u.ac.jp}{hirose@imi.kyushu-u.ac.jp}.}\\[1ex]
{\small $^1$Institute of Mathematics for Industry, Kyushu University, Fukuoka, Japan}\\
{\small $^2$Department of Mathematical and Physical Sciences, La Trobe University, Melbourne, Victoria, Australia}%
}

\makeatother

\begin{document}
\maketitle
\begin{abstract}
We study model selection by the Bayesian information criterion (BIC)
in fixed-dimensional exploratory factor analysis over a fixed finite
family of compact covariance classes. Our main result shows
that the BIC is strongly consistent for the pseudo-true factor
order under misspecification, provided that all globally optimal models
share a common pseudo-true covariance set, the population Gaussian
criterion has a local quadratic margin away from that set, and the
BIC complexity counts are order-separating at the pseudo-true order.
The candidate models may have an unknown mean vector, exact-zero restrictions
in the loading matrix, and either diagonal or spherical error covariance
structures, and the selection target is the smallest candidate factor
order that yields the best Gaussian approximation, in Kullback--Leibler
divergence, to the data-generating covariance structure. The proof works directly in covariance
space, so it does not require a regular loading parametrization and
accommodates the familiar singularities caused by rotations and redundant
factors. Under correct specification, the assumptions reduce to familiar
properties of the true covariance matrix. More generally, the same
argument applies to other information criteria whose penalties satisfy
the same gap conditions, including several BIC-type modifications.
\end{abstract}

\paragraph{Keywords.}

Bayesian information criterion; exploratory factor analysis; model
selection consistency; misspecified Gaussian likelihood; singular
models.

\section{Introduction}

\label{sec:intro}

Exploratory factor analysis remains one of the central tools of multivariate
analysis. In behavioural research, psychometrics, education, marketing,
finance, and many adjacent fields, it provides a parsimonious representation
of dependence among observed variables through a smaller number of
latent common factors. A useful general introduction is the monograph
of \citet{Basilevsky1994}. In practice, however, exploratory factor
analysis requires several model-selection decisions at once: one must
determine how many factors should be retained, what structural restrictions
should be placed on the loading matrix, and what form should be imposed
on the error covariance matrix.

In the classical factor model 
\[
\Sigma=\Lambda\Lambda^{\top}+\Psi,
\]
the analyst must determine the factor dimension, decide whether the
loading matrix $\Lambda$ is unrestricted or subject to exploratory
sparsity or confirmatory factor analysis (CFA) zero restrictions,
and choose the form of the error covariance matrix $\Psi$, typically
diagonal or spherical. A standard and practical way to make those
decisions is to compare candidate models by information criteria,
a tradition going back to \citet{Akaike1973} and \citet{Schwarz1978}
and extending early to factor analysis itself; see also \citet{Akaike1987},
who discussed the use of the Akaike information criterion (AIC) in
factor models. %The factor-analysis literature using such criteria is well reviewed in \citet{HiroseEtAl2011} and \citet{ChoiJeong2019}. In parallel, regularization has been used to encourage sparsity, simplified loading structure, and numerical stability through penalized likelihood; see, for example, \citet{HiroseImada2018} and \citet{HiroseTerada2023}.
A Bayesian approach to model assessment in factor analysis is also
discussed by \citet{LopesWest2004}. Specific work on selecting the
number of factors by information criteria includes \citet{HiroseEtAl2011}.
Related work for approximate factor models includes \citet{ChoiJeong2019}.

Despite the routine use of the Bayesian information criterion (BIC)
in practical factor-order selection (see e.g., \citealp{LopesWest2004,HiroseEtAl2011,SongBelin2008,ChenEtAl2010,PearsonEtAl2013,PreacherEtAl2013}),
its theoretical justification in such applications is delicate. The
classical asymptotic arguments behind BIC in regular or misspecified
parametric families, including the model-selection analyses of \citet{Nishii1988}
and \citet{Vuong1989} and the broader misspecified or loss-based treatments
of \citet{SinWhite1996} and \citet{Nguyen2024}, rely on local regularity
around pseudo-true minimizers: smooth local parametrizations, well-behaved
quadratic expansions, and nondegenerate curvature. Related minimum
empirical risk asymptotics on general parameter spaces are developed
by \citet{WesterhoutEtAl2024}. Factor models do not generally satisfy
those regularity requirements. They are often singular latent-variable
models in precisely the sense emphasized by \citet{Watanabe2009}
and \citet{DrtonPlummer2017}. When one compares models of different
factor dimension, the lower-order model typically lies on a singular
manifold of the higher-order model, where redundant loadings vanish,
rotations become unidentified, and information matrices may be singular.
\citet{ChenMoustakiZhang2020} make this issue especially clear for
likelihood-ratio tests. The same singular geometry explains why the
ordinary regular-model justification of BIC cannot simply be applied
without modification to exploratory factor analysis. Throughout, we
use ``BIC'' in the broad model-selection sense of the criterion obtained
by penalizing the profiled log-likelihood by $\frac{1}{2}d_{k}\log n$
for a chosen complexity assignment $d_{k}$. In singular settings this
broad usage need not coincide with the narrower Laplace-approximation-based
Bayesian marginal-likelihood interpretation; see \citet{Watanabe2013} and \citet{DrtonPlummer2017}.

Recent works show that the consistency of information criteria depends
strongly on the asymptotic regime and on the statistical object being
selected. \citet{BaiNg2002} study static approximate factor models
when both the cross-sectional and time dimensions diverge. \citet{AmengualWatson2007}
analyze the number of dynamic factors in large panels, and \citet{ChoiJeong2019}
derive and compare AIC-, CAIC-, BIC-, and Hannan--Quinn (HQ)-type
criteria for approximate factor models in the same large-panel tradition.
Related refinements or alternatives include the spectral-density criterion
of \citet{HallinLiska2007} for the general dynamic factor model,
the tuning refinement of \citet{AlessiBarigozziCapasso2010} for the
criterion of \citet{BaiNg2002}, the edge-distribution method of \citet{Onatski2010},
and the eigenvalue-ratio and growth-ratio selectors of \citet{AhnHorenstein2013}.
In structural equation modeling more broadly, \citet{Huang2017} analyzes
the asymptotic behavior of AIC, BIC, and the root mean square error
of approximation (RMSEA) under weak distributional assumptions. In
high-dimensional rank-selection problems, \citet{MorimotoHungHuang2026}
obtain information-criterion consistency results under random-matrix
gap conditions. Thus these contributions address different asymptotic
regimes or different selection targets from the fixed-$p$ Gaussian
factor-analysis covariance classes of direct interest here, so they
do not directly yield the theorem proved below for the compact exploratory
setting.

Another related branch of the literature approaches the problem from
singular Bayesian asymptotics. \citet{DrtonPlummer2017} develop the
singular BIC (sBIC) for singular model-selection problems, \citet{Watanabe2013}
develops the widely applicable BIC (WBIC) within singular learning
theory, and \citet{DrtonEtAl2025} sharpen the singular-learning analysis
specifically for factor analysis. This literature clarifies that,
in singular models, the Bayesian marginal-likelihood interpretation
of BIC is subtler than in regular settings. But it answers a different
question from the one pursued here. Our consistency target
is the smallest factor order among the Gaussian Kullback--Leibler-optimal
covariance classes; under the BIC corollary, the selected model is
then of minimal BIC complexity within that optimal order. Singular
learning theory instead studies asymptotic marginal-likelihood penalties
that characterize complexity via intrinsic learning coefficients,
which are model-dependent geometric quantities and are not themselves
the frequentist order target considered here.

A complementary generic route is provided by recent information-criterion
results beyond likelihood-based factor models. \citet{Nguyen2024} and \citet{WesterhoutEtAl2024}
develop the PanIC and Sin--White information criterion (SWIC) framework, which gives general sufficient conditions
for consistent information criteria in loss-based model selection. In the minimum-empirical-risk normalization of
\citet{WesterhoutEtAl2024}, BIC contributes a penalty of
order $(\log n)/n$, whereas the SWIC penalty is of order $(\log n)/\sqrt{n}$.
Thus the generic sufficient conditions there require penalties that
are asymptotically larger than BIC; on the unnormalized log-likelihood
scale used in the present work, the corresponding SWIC penalty is
of order $\sqrt{n}\log n$. Consequently, these generic empirical-risk
results do not by themselves yield the BIC theorem proved here.

The main contribution of this work is an empirical process-based theorem
for structured exploratory factor analysis over a fixed finite candidate
family of compact covariance classes. When the mean vector is unknown,
it profiles out exactly, because for every covariance matrix the Gaussian
likelihood is maximized at the sample mean. The problem therefore
reduces to choosing the covariance structure. Under unrestricted misspecification,
BIC may fail to be consistent. We therefore isolate a natural
structural assumption whereby every globally optimal model, and in
particular every globally optimal higher-order model (an exact overfit
in our terminology), shares a common pseudo-true covariance set already
attained at the minimal optimal order. Together with a local quadratic
margin around that common set, and with order-separating BIC complexity
counts at the pseudo-true order, this structure is enough to recover
strong consistency of BIC over the intended family of restrictions.
The proof follows the underfit/overfit template of \citet{Keribin2000}
for mixture models, but it is carried out in covariance space and
uses compact empirical-process localization arguments in the spirit
of \citet{vanDeGeer2000}. Related empirical-process analyses of nonregular
order selection are given by \citet{vanHandel2011} and \citet{GassiatVanHandel2013}.

Three further features of the theorem are worth highlighting. First,
the target under misspecification is defined through Gaussian Kullback--Leibler
approximation in the sense of pseudo-true covariance sets, following
the misspecified-likelihood framework of \citet{White1982,WhiteBook1994}.
Under correct specification, this target reduces exactly to the classical
desideratum: the pseudo-true order is then the smallest order whose
covariance class contains the true covariance matrix. Thus the present
framework extends, rather than replaces, the usual correctly specified
interpretation. Second, the proof is carried out in covariance space
rather than loading space. That is crucial because the loading parametrization
is where rotational nonidentifiability, self-intersections, exact
zeros, and redundant factors generate singular geometry. By working
instead with the covariance representation, we allow those singularities
rather than rule them out by assumption. Third, the main theorem is
formulated for a broader class of penalties than those defined by
the usual BIC multiplier $(1/2)\log n$. BIC is recovered as the leading
corollary once the corresponding complexity counts separate the pseudo-true
order from higher-order optimal models, but the same argument applies
to any penalty system whose relevant gaps between higher-order exact
overfits and the minimal optimal order diverge faster than $\log\log n$
while remaining $o(n)$. Thus the paper characterizes a whole family
of BIC-type and heavier sublinear information criteria, including
many penalties considered by \citet{Huang2017} and \citet{ChoiJeong2019}.

The remainder of the paper is organized as follows. Section~\ref{sec:notation}
introduces the candidate covariance classes, the profiled Gaussian
quasi-likelihood with unknown mean, the pseudo-true order, and the
misspecification assumptions that isolate the exact-overfit geometry
needed for the theorem. Section~\ref{sec:main} states the main consistency
theorem, gives its BIC corollaries for dense and sparse structured
factor classes, and records conventional complexity assignments together
with more general sublinear penalty variants. Section~\ref{sec:discussion}
compares the theorem with the surrounding literature and explains
more precisely what is and is not covered by earlier positive results,
by regularization methods, and by singular Bayesian criteria such
as sBIC and WBIC. The appendix includes auxiliary technical results
and proofs of the stated theorems and propositions.

\section{Notation and technical preliminaries}

\label{sec:notation}

\subsection{Mathematical setup and notation}

Throughout, $(\Omega,\mathcal{A},\mathbb{P})$ is the underlying probability
space. All random variables are defined on this space, and all probabilities
and almost-sure statements are taken with respect to $\mathbb{P}$.
The data are an iid sequence $X_{1},X_{2},\dots:\Omega\to\mathbb{R}^{p}$
with common law $P_{0}$, mean $\mu_{0}\in\mathbb{R}^{p}$, and covariance
$\Sigma_{0}\in\mathbb{S}_{++}^{p}$. For an integer $r\ge1$, write
$[r]=\{1,\dots,r\}$, and adopt the convention $[0]=\varnothing$.

Let $\mathbb{S}^{p}$ denote the space of real symmetric $p\times p$
matrices, let $\mathbb{S}_{+}^{p}$ denote the cone of positive semidefinite
matrices, and let $\mathbb{S}_{++}^{p}$ denote the cone of positive
definite matrices. For $A,B\in\mathbb{S}^{p}$, we write $A\preceq B$
if $B-A\in\mathbb{S}_{+}^{p}$ and $A\prec B$ if $B-A\in\mathbb{S}_{++}^{p}$;
this is the usual Loewner ordering. The identity matrix is denoted
by $I_{p}$. For vectors $x\in\mathbb{R}^{p}$, $\|x\|$ denotes
the Euclidean norm. We use $\|\cdot\|_{F}$ and $\|\cdot\|_{\mathrm{op}}$
for the Frobenius and operator norms, respectively, and distances
between matrices are taken with respect to $\|\cdot\|_{F}$ unless
explicitly stated otherwise. For $\mu\in\mathbb{R}^{p}$ and $\Sigma\in\mathbb{S}_{++}^{p}$,
$\mathcal{N}(\mu,\Sigma)$ denotes the $p$-variate normal law with
mean $\mu$ and covariance $\Sigma$. Thus, for a set $\mathcal{S}\subset\mathbb{S}^{p}$,
\[
\dist(\Sigma,\mathcal{S})=\inf_{\Gamma\in\mathcal{S}}\|\Sigma-\Gamma\|_{F}.
\]
Because $p$ is fixed throughout, all norms on $\mathbb{S}^{p}$ are
equivalent; the Frobenius norm is used only for convenience.

\subsection{Structured factor-analysis covariance classes}

Fix an observed dimension $p\ge1$ and a maximal candidate order $q_{\max}\ge0$.
Let $m\ge1$ be the total number of candidate covariance models. For
each model index $k\in[m]$, fix a factor order $q_{k}\in\{0,1,\dots,q_{\max}\}$,
a support pattern $E_{k}\subseteq[p]\times[q_{k}]$, and an error-covariance
type $\tau_{k}\in\{\mathrm{diag},\mathrm{sph}\}$. Thus model $k$
stands for a fully specified structured factor-analysis covariance
class.

Fix constants 
\[
0<\underline{\psi}<\overline{\psi}<\infty,\qquad M<\infty.
\]
For each $k$, define the loading class 
\[
\mathcal{L}_{k}=\left\{\Lambda\in\mathbb{R}^{p\times q_{k}}:\lambda_{jh}=0\text{ whenever }(j,h)\notin E_{k},\ \|\Lambda\|_{F}\le M\right\},
\]
and define the corresponding error-covariance classes 
\[
\mathcal{P}_{\mathrm{diag}}=\{\diag(\psi_{1},\dots,\psi_{p}):\underline{\psi}\le\psi_{j}\le\overline{\psi}\},\qquad\mathcal{P}_{\mathrm{sph}}=\{\psi I_{p}:\underline{\psi}\le\psi\le\overline{\psi}\}.
\]
The covariance class attached to model $k$ is 
\[
\mathcal{F}_{k}=\{\Sigma=\Lambda\Lambda^{\top}+\Psi:\Lambda\in\mathcal{L}_{k},\ \Psi\in\mathcal{P}_{\tau_{k}}\}\subset\mathbb{S}_{++}^{p}.
\]
These classes include compactly bounded dense exploratory models, sparse
exploratory models, and confirmatory zero patterns, together with
either diagonal or spherical error covariance matrices.

\begin{proposition}[Compactness and uniform eigenvalue bounds]\label{prop:compact}
For every $k\in[m]$, the class $\mathcal{F}_{k}$ is compact. The
finite union 
\[
\mathcal{K}=\bigcup_{k=1}^{m}\mathcal{F}_{k}
\]
is compact as well, and every $\Sigma\in\mathcal{K}$ satisfies 
\[
\underline{\psi}I_{p}\preceq\Sigma\preceq(M^{2}+\overline{\psi})I_{p}.
\]
Consequently the inverse map is uniformly bounded and Lipschitz on
$\mathcal{K}$. \end{proposition}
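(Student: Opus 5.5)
The plan is to realize each $\mathcal{F}_{k}$ as the continuous image of a compact set. The parameter set $\mathcal{L}_{k}\times\mathcal{P}_{\tau_{k}}$ is compact in $\mathbb{R}^{p\times q_{k}}\times\mathbb{S}^{p}$. Indeed, $\mathcal{L}_{k}$ is the intersection of a linear subspace (the coordinates outside $E_{k}$ set to zero), which is closed, with the closed Frobenius ball of radius $M$, so it is closed and bounded. Likewise $\mathcal{P}_{\mathrm{diag}}$ and $\mathcal{P}_{\mathrm{sph}}$ are images of the compact box $[\underline{\psi},\overline{\psi}]^{p}$ and the compact interval $[\underline{\psi},\overline{\psi}]$ under continuous linear maps. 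The map $(\Lambda,\Psi)\mapsto\Lambda\Lambda^{\top}+\Psi$ is polynomial, hence continuous, so $\mathcal{F}_{k}$ is compact. If $q_{k}=0$, we read $\Lambda\Lambda^{\top}=0$ and the argument is unchanged. A finite union of compact sets is compact, so $\mathcal{K}$ is compact.

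For the eigenvalue bounds, fix $\Sigma=\Lambda\Lambda^{\top}+\Psi\in\mathcal{K}$ and a unit vector $x$. Then
\[
x^{\top}\Sigma x=\|\Lambda^{\top}x\|^{2}+x^{\top}\Psi x .
\]
Since $\Lambda\Lambda^{\top}\succeq0$ and $\Psi\succeq\underline{\psi}I_{p}$ in both error-covariance types, we get $x^{\top}\Sigma x\ge\underline{\psi}$. For the upper bound, $\|\Lambda^{\top}x\|^{2}\le\|\Lambda\|_{\mathrm{op}}^{2}\le\|\Lambda\|_{F}^{2}\le M^{2}$, and $x^{\top}\Psi x\le\overline{\psi}$. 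This gives $\underline{\psi}I_{p}\preceq\Sigma\preceq(M^{2}+\overline{\psi})I_{p}$. In particular $\mathcal{K}\subset\mathbb{S}_{++}^{p}$.

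For the final claim, the lower eigenvalue bound gives $\|\Sigma^{-1}\|_{\mathrm{op}}\le\underline{\psi}^{-1}$ uniformly on $\mathcal{K}$. Hence $\|\Sigma^{-1}\|_{F}\le\sqrt{p}\,\underline{\psi}^{-1}$, which is the uniform boundedness. For the Lipschitz property, use the resolvent identity
\[
\Sigma_{1}^{-1}-\Sigma_{2}^{-1}=\Sigma_{1}^{-1}(\Sigma_{2}-\Sigma_{1})\Sigma_{2}^{-1}.
\]
Combined with $\|ABC\|_{F}\le\|A\|_{\mathrm{op}}\|B\|_{F}\|C\|_{\mathrm{op}}$, it gives
\[
\|\Sigma_{1}^{-1}-\Sigma_{2}^{-1}\|_{F}\le\underline{\psi}^{-2}\|\Sigma_{1}-\Sigma_{2}\|_{F}
\]
for all $\Sigma_{1},\Sigma_{2}\in\mathcal{K}$. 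This bound holds even when $\Sigma_{1}$ and $\Sigma_{2}$ come from different classes $\mathcal{F}_{k}$, because it uses only the lower eigenvalue bound shared by all of $\mathcal{K}$.

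There is no real obstacle here; every step is routine. The only points that need care are the degenerate case $q_{k}=0$, the inequality $\|\cdot\|_{\mathrm{op}}\le\|\cdot\|_{F}$ used to turn the Frobenius constraint on $\Lambda$ into an operator bound, and checking that the resolvent-identity bound applies to all pairs in $\mathcal{K}$ rather than within a single class.
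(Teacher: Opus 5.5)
Your proposal is correct and follows essentially the same route as the paper's proof: $\mathcal{F}_{k}$ is compact as the continuous image of $\mathcal{L}_{k}\times\mathcal{P}_{\tau_{k}}$, the eigenvalue bounds come from the quadratic form together with $\|\Lambda\|_{\mathrm{op}}\le\|\Lambda\|_{F}\le M$, and the resolvent identity gives the Lipschitz constant $\underline{\psi}^{-2}$ on all of $\mathcal{K}$. Your extra remarks on the case $q_{k}=0$ and on the bound holding across different classes are harmless refinements of that same argument.
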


\subsection{Gaussian quasi-likelihood}

Let $X_{1},\dots,X_{n}$ be iid observations in $\mathbb{R}^{p}$.
For $\mu\in\mathbb{R}^{p}$ and $\Sigma\in\mathbb{S}_{++}^{p}$, define
the Gaussian log-likelihood (up to the irrelevant constant $-np\log(2\pi)/2$)
by 
\[
\ell_{n}(\mu,\Sigma)=-\frac{n}{2}\log\det\Sigma-\frac{1}{2}\sum_{t=1}^{n}(X_{t}-\mu)^{\top}\Sigma^{-1}(X_{t}-\mu).
\]
Write 
\[
\bar{X}_{n}=\frac{1}{n}\sum_{t=1}^{n}X_{t},\qquad S_{n}=\frac{1}{n}\sum_{t=1}^{n}(X_{t}-\bar{X}_{n})(X_{t}-\bar{X}_{n})^{\top}.
\]

\begin{proposition}[Profiling the mean]\label{prop:profile} For
every fixed $\Sigma\in\mathbb{S}_{++}^{p}$, the map $\mu\mapsto\ell_{n}(\mu,\Sigma)$
is uniquely maximized at $\bar{X}_{n}$. Hence the profiled Gaussian
criterion is 
\[
\widetilde{\ell}_{n}(\Sigma)=\sup_{\mu\in\mathbb{R}^{p}}\ell_{n}(\mu,\Sigma)=-\frac{n}{2}\left\{\log\det\Sigma+\tr(S_{n}\Sigma^{-1})\right\}.
\]
For each $k\in[m]$, define 
\[
T_{k,n}=\sup_{\Sigma\in\mathcal{F}_{k}}\widetilde{\ell}_{n}(\Sigma).
\]
Because $\mathcal{F}_{k}$ is compact and $\widetilde{\ell}_{n}$
is continuous, the supremum is attained. \end{proposition}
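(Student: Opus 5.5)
The plan is to verify the three claims in order: unique maximization in $\mu$, the closed form of the profiled criterion, and attainment of $T_{k,n}$.

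\emph{Maximization in $\mu$.} Fix $\Sigma\in\mathbb{S}_{++}^{p}$. Write $X_{t}-\mu=(X_{t}-\bar{X}_{n})+(\bar{X}_{n}-\mu)$ and expand the quadratic form. The cross term is
\[
2\sum_{t=1}^{n}(X_{t}-\bar{X}_{n})^{\top}\Sigma^{-1}(\bar{X}_{n}-\mu),
\]
which vanishes because $\sum_{t}(X_{t}-\bar{X}_{n})=0$. This gives the exact identity
\[
\sum_{t=1}^{n}(X_{t}-\mu)^{\top}\Sigma^{-1}(X_{t}-\mu)=\sum_{t=1}^{n}(X_{t}-\bar{X}_{n})^{\top}\Sigma^{-1}(X_{t}-\bar{X}_{n})+n(\bar{X}_{n}-\mu)^{\top}\Sigma^{-1}(\bar{X}_{n}-\mu).
\]
Since $\Sigma^{-1}\succ0$, the last term is nonnegative and vanishes iff $\mu=\bar{X}_{n}$. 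Hence $\mu\mapsto\ell_{n}(\mu,\Sigma)$ has the unique maximizer $\bar{X}_{n}$.

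\emph{Closed form of the profiled criterion.} Substitute $\mu=\bar{X}_{n}$ and use the cyclic property of the trace:
\[
\sum_{t}(X_{t}-\bar{X}_{n})^{\top}\Sigma^{-1}(X_{t}-\bar{X}_{n})=\tr\Bigl(\Sigma^{-1}\sum_{t}(X_{t}-\bar{X}_{n})(X_{t}-\bar{X}_{n})^{\top}\Bigr)=n\tr(S_{n}\Sigma^{-1}).
\]
Plugging this into $\ell_{n}$ gives $\widetilde{\ell}_{n}(\Sigma)=-\frac{n}{2}\{\log\det\Sigma+\tr(S_{n}\Sigma^{-1})\}$.

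\emph{Attainment of $T_{k,n}$.} This is a direct application of Proposition~\ref{prop:compact}. Every $\Sigma\in\mathcal{F}_{k}\subset\mathcal{K}$ satisfies $\Sigma\succeq\underline{\psi}I_{p}$, so $\mathcal{F}_{k}$ lies in $\mathbb{S}_{++}^{p}$. On $\mathbb{S}_{++}^{p}$, both $\log\det$ and $\Sigma\mapsto\Sigma^{-1}$ are continuous, so $\widetilde{\ell}_{n}$ is continuous there for each realization of $S_{n}$. By Proposition~\ref{prop:compact}, $\mathcal{F}_{k}$ is compact, and the Weierstrass extreme value theorem then gives a maximizer.

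There is no real obstacle. The only points needing care are to state that the cross term vanishes exactly, which makes uniqueness immediate, and to note that compactness inside the open cone $\mathbb{S}_{++}^{p}$ is exactly what the uniform lower eigenvalue bound from Proposition~\ref{prop:compact} guarantees.
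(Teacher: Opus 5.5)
Your proposal is correct and follows essentially the same route as the paper: you decompose the quadratic form around $\bar{X}_{n}$ with a vanishing cross term, read off uniqueness from $\Sigma^{-1}\succ0$, substitute to obtain the trace form, and use compactness plus continuity for attainment. Your explicit remark that the lower eigenvalue bound places $\mathcal{F}_{k}$ inside the open cone, where $\widetilde{\ell}_{n}$ is continuous, is a small detail that the paper leaves implicit.
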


A useful consequence is that if the BIC is written for the full Gaussian
parameter pair $(\mu,\Sigma)$ with unknown mean, every candidate
model receives the same additional mean penalty $p\log n/2$. Therefore
order comparisons are unchanged whether one works with the full-parameter
unknown-mean BIC or the profiled unknown-mean BIC. This observation
does not identify the profiled criterion with the known-mean criterion, whose likelihood differs
from $\widetilde{\ell}_{n}$ by a $\Sigma$-dependent term.

\subsection{Population quantities}

Assume from now on that the data-generating law $P_{0}$ has mean
$\mu_{0}$ and covariance $\Sigma_{0}\in\mathbb{S}_{++}^{p}$. The
population Gaussian cross-entropy corresponding to $\widetilde{\ell}_{n}$
is 
\[
Q(\Sigma)=-\frac{1}{2}\left\{\log\det\Sigma+\tr(\Sigma_{0}\Sigma^{-1})\right\}.
\]
Equivalently,
\[
Q(\Sigma)=-\KL\!\left(\mathcal{N}(\mu_{0},\Sigma_{0})\,\|\,\mathcal{N}(\mu_{0},\Sigma)\right)-\frac{1}{2}\left(\log\det\Sigma_{0}+p\right),
\]
so maximizing $Q$ is the same as minimizing the Gaussian Kullback--Leibler
divergence from the covariance structure $(\mu_{0},\Sigma_{0})$ to
$(\mu_{0},\Sigma)$. For the full unknown-mean criterion, 
\[
\Gamma(\mu,\Sigma)=-\frac{1}{2}\left\{\log\det\Sigma+\tr(\Sigma_{0}\Sigma^{-1})+(\mu-\mu_{0})^{\top}\Sigma^{-1}(\mu-\mu_{0})\right\},
\]
so the population-optimal mean is $\mu_{0}$ for every $\Sigma$.

\begin{definition}[Pseudo-true covariance sets and order]\label{def:pseudo}
For each $k\in[m]$, define 
\[
V_{k}=\sup_{\Sigma\in\mathcal{F}_{k}}Q(\Sigma),\qquad\mathcal{G}_{k}=\argmax_{\Sigma\in\mathcal{F}_{k}}Q(\Sigma).
\]
Let 
\[
V_{*}=\max_{1\le k\le m}V_{k},\qquad K_{*}=\{k\in[m]:V_{k}=V_{*}\}.
\]
The pseudo-true order is 
\[
q_{*}=\min\{q_{k}:k\in K_{*}\},
\]
and the set of globally optimal models of minimal order is 
\[
K_{0}=\{k\in K_{*}:q_{k}=q_{*}\}.
\]
\end{definition}

We shall call a model $k\in K_{*}$ with $q_{k}>q_{*}$ an exact
overfit (or exact overfitted model). Thus an exact overfit is a globally
optimal model of strictly larger order than the pseudo-true order.
Under Assumption~\ref{it:M2}, every exact overfit shares the common
pseudo-true covariance set $\mathcal{G}_{*}$.

By continuity of $Q$ and compactness of each $\mathcal{F}_{k}$,
every pseudo-true set $\mathcal{G}_{k}$ is nonempty and compact.
Thus the selection target under misspecification is not a true parametric
order, but the smallest factor order attaining the best Gaussian Kullback--Leibler
approximation among the candidate covariance classes.

\begin{remark}[Correct specification as a special case]\label{rem:correctspecial}
If the true covariance matrix $\Sigma_{0}$ belongs to at least one
candidate class $\mathcal{F}_{k}$ (in particular, under Gaussian
correct specification), then $Q$ is uniquely maximized at $\Sigma_{0}$
in ambient covariance space. In that case every exact overfit model
has pseudo-true set $\mathcal{G}_{k}=\{\Sigma_{0}\}$, so $q_{*}$
coincides with the minimal correctly specified order. \end{remark}

\subsection{Technical assumptions}

Our theory requires the following regularity assumption.

\begin{assumption}\label{ass:miss} \leavevmode 
\begin{enumerate}[label=\textnormal(M\arabic*)]
\item \label{it:M1} The observations are iid with mean $\mu_{0}$, covariance
$\Sigma_{0}\in\mathbb{S}_{++}^{p}$, and finite fourth moment $\mathbb{E}\|X_{1}\|^{4}<\infty$. 
\item \label{it:M2} There exists a nonempty compact set $\mathcal{G}_{*}\subset\mathcal{K}$
such that 
\[
\mathcal{G}_{k}=\mathcal{G}_{*}\qquad\text{for every }k\in K_{*}.
\]
\item \label{it:M3} For each $k\in K_{*}$ there exist constants $c_{k}>0$
and $\eta_{k}>0$ such that 
\[
Q(\Sigma)\le V_{*}-c_{k}\,\dist(\Sigma,\mathcal{G}_{*})^{2}\qquad\text{for all }\Sigma\in\mathcal{F}_{k}\text{ with }\dist(\Sigma,\mathcal{G}_{*})<\eta_{k}.
\]
\end{enumerate}
\end{assumption}

Assumption~\ref{it:M2} requires all globally optimal models, and
in particular all exact overfits, to share a common pseudo-true covariance
set. It is the misspecified analogue of the correctly specified fact
that all exact overfits contain the same true covariance matrix. Assumption~\ref{it:M3}
is the local quadratic margin that converts this common-projection
property into an $O(\log\log n)$ bound for the empirical gain of
an exact overfit.

\begin{remark}[A sufficient condition for \ref{it:M3}]\label{rem:M3auto}
Assumption~\ref{it:M3} holds in the correctly specified setting
and, more generally, whenever the pseudo-true covariance set reduces
to a single covariance at which $Q$ has a strictly negative quadratic
expansion. In particular, if $\Sigma_{0}\in\mathcal{F}_{k}$ for at
least one candidate model, then $Q$ is uniquely maximized at $\Sigma_{0}$
in ambient covariance space, so every exact overfitted model has pseudo-true
set $\{\Sigma_{0}\}$. Moreover, the second derivative 
\[
D^{2}Q(\Sigma_{0})[H,H]=-\frac{1}{2}\tr\!\left(\Sigma_{0}^{-1}H\Sigma_{0}^{-1}H\right),\qquad H\in\mathbb{S}^{p},
\]
is strictly negative for every nonzero symmetric direction $H$, because
\[
\tr\!\left(\Sigma_{0}^{-1}H\Sigma_{0}^{-1}H\right)=\left\|\Sigma_{0}^{-1/2}H\Sigma_{0}^{-1/2}\right\|_{F}^{2}>0
\]
whenever $H\neq0$. By continuity of $D^{2}Q$, there exist constants
$c>0$ and $\eta>0$ such that
\[
Q(\Sigma)\le Q(\Sigma_{0})-c\,\|\Sigma-\Sigma_{0}\|_{F}^{2}\qquad\text{whenever }\|\Sigma-\Sigma_{0}\|_{F}<\eta.
\]
Since $\dist(\Sigma,\{\Sigma_{0}\})=\|\Sigma-\Sigma_{0}\|_{F}$,
this is exactly the quadratic margin in \ref{it:M3}. Thus \ref{it:M3}
is not arbitrary: it characterizes the natural misspecified analogue
of a property that is immediate under correct specification. \end{remark}

\begin{remark}[On condition \ref{it:M2}]\label{rem:whynecessary}
Heuristically, if two equally good higher-order models are optimized
at different isolated pseudo-true covariances $\Sigma_{i}^{*}\neq\Sigma_{j}^{*}$,
then 
\[
\widetilde{\ell}_{n}(\Sigma_{i}^{*})-\widetilde{\ell}_{n}(\Sigma_{j}^{*})=-\frac{n}{2}\tr\left((S_{n}-\Sigma_{0})(\Sigma_{i}^{*-1}-\Sigma_{j}^{*-1})\right),
\]
which, whenever the corresponding centered linear contrast is nondegenerate,
is generically of order $\sqrt{n}$ in probability and of order $\sqrt{n\log\log n}$
almost surely. A BIC penalty of order $\log n$ cannot dominate
such fluctuations. Thus one should not expect a universal misspecified
BIC theorem without a condition such as Assumption~\ref{it:M2}. \end{remark}

\begin{remark}[Pathologies when \ref{it:M2} or \ref{it:M3} fail]\label{rem:pathologies} 
It is useful to distinguish the two mechanisms that Assumption~\ref{ass:miss}
rules out. The first is the failure of a common pseudo-true covariance
set. In a one-dimensional covariance problem, take $p=1$ and
$\Sigma_{0}=1$, so
\[
Q(\sigma)=-\frac{1}{2}\left\{\log\sigma+\frac{1}{\sigma}\right\},\qquad\sigma>0.
\]
Here
\[
Q'(\sigma)=-\frac{1}{2}\left(\frac{1}{\sigma}-\frac{1}{\sigma^{2}}\right)=-\frac{\sigma-1}{2\sigma^{2}},
\]
so $Q$ is strictly increasing on $(0,1)$ and strictly decreasing on
$(1,\infty)$. Fix any $\sigma_{-}\in(0,1)$. Since $Q(\sigma)\to-\infty$
as $\sigma\downarrow0$ or $\sigma\uparrow\infty$, there exists a unique
$\sigma_{+}>1$ such that $Q(\sigma_{+})=Q(\sigma_{-})$. If one takes the compact covariance classes $\mathcal{F}_{1}=\{\sigma_{-}\}$ and
$\mathcal{F}_{2}=\{\sigma_{+}\}$, then $K_{*}=\{1,2\}$ but
$\mathcal{G}_{1}\neq\mathcal{G}_{2}$. Since each class is a singleton,
\[
T_{1,n}=\widetilde{\ell}_{n}(\sigma_{-}),\qquad T_{2,n}=\widetilde{\ell}_{n}(\sigma_{+}),
\]
and
\[
\widetilde{\ell}_{n}(\sigma_{-})-\widetilde{\ell}_{n}(\sigma_{+})=-\frac{n}{2}\left\{\log\sigma_{-}-\log\sigma_{+}+S_{n}\left(\sigma_{-}^{-1}-\sigma_{+}^{-1}\right)\right\}.
\]
Because $Q(\sigma_{+})=Q(\sigma_{-})$, we also have
\[
\log\sigma_{-}-\log\sigma_{+}=\sigma_{+}^{-1}-\sigma_{-}^{-1},
\]
so the likelihood contrast simplifies to
\[
\widetilde{\ell}_{n}(\sigma_{-})-\widetilde{\ell}_{n}(\sigma_{+})=-\frac{n}{2}(S_{n}-1)\left(\sigma_{-}^{-1}-\sigma_{+}^{-1}\right).
\]
The coefficient is nonzero because $\sigma_{-}\neq\sigma_{+}$. Hence the
likelihood difference has exactly the fluctuation scale of the centered
sample covariance $S_{n}-1$; in generic nondegenerate cases this is of
order $\sqrt{n}$ in probability and of order $\sqrt{n\log\log n}$
almost surely, which is too large for a BIC penalty of order $\log n$
to dominate.

The second pathology is the failure of the quadratic margin even when
the pseudo-true covariance set is common. Assume here that $p\ge2$, so
that covariance space has dimension at least two. Fix any covariance
$\Sigma^{\star}\in\mathbb{S}_{++}^{p}$ with $\Sigma^{\star}\neq\Sigma_{0}$.
Since
\[
DQ(\Sigma)[H]=-\frac{1}{2}\tr\left(\left(\Sigma^{-1}-\Sigma^{-1}\Sigma_{0}\Sigma^{-1}\right)H\right),\qquad H\in\mathbb{S}^{p},
\]
the derivative vanishes only at $\Sigma=\Sigma_{0}$, so $DQ(\Sigma^{\star})\neq0$.
By the implicit function theorem, the level set $\{\Sigma:Q(\Sigma)=Q(\Sigma^{\star})\}$
therefore contains a smooth local curve $\zeta(t)$ through $\Sigma^{\star}$
with $\zeta(0)=\Sigma^{\star}$ and $\|\zeta(t)-\Sigma^{\star}\|_{F}\asymp|t|$.
Because $DQ(\Sigma^{\star})$ is a nonzero linear functional, one can
choose $H\in\mathbb{S}^{p}$ with $DQ(\Sigma^{\star})[H]<0$ and define
\[
\gamma(t)=\zeta(t)+t^{4}H.
\]
For $|t|$ small enough, $\gamma(t)$ remains positive definite, and a
first-order Taylor expansion around $\zeta(t)$ gives
\[
Q(\gamma(t))=Q(\zeta(t))+t^{4}DQ(\zeta(t))[H]+o(t^{4})=Q(\Sigma^{\star})+t^{4}DQ(\Sigma^{\star})[H]+o(t^{4})
\]
because $Q(\zeta(t))=Q(\Sigma^{\star})$ and $DQ(\zeta(t))[H]=DQ(\Sigma^{\star})[H]+o(1)$.
Thus
\[
Q(\gamma(t))=Q(\Sigma^{\star})-c t^{4}+o(t^{4})
\]
for some $c>0$, while $\|\gamma(t)-\Sigma^{\star}\|_{F}\asymp|t|$. Hence, for $\delta>0$ small, the compact covariance class
\[
\mathcal{F}=\{\gamma(t):|t|\le\delta\}
\]
has pseudo-true set $\{\Sigma^{\star}\}$ but satisfies
\[
V_{*}-Q(\gamma(t))\asymp t^{4}=o\left(\dist(\gamma(t),\{\Sigma^{\star}\})^{2}\right),
\]
so Assumption~\ref{it:M3} fails on $\mathcal{F}$. If one includes alongside $\mathcal{F}$ the singleton class $\{\Sigma^{\star}\}$, then the globally optimal models share the common pseudo-true set $\{\Sigma^{\star}\}$ while \ref{it:M3} still fails for the curved class. These examples are purely illustrative,
but they show that \ref{it:M2} and \ref{it:M3} exclude different kinds of
local pathology.
\end{remark}

\section{Main results}

\label{sec:main}

Let $a_{k,n}\in\mathbb{R}$ be the penalty attached to model $k$.
Define the penalized score 
\[
W_{k,n}=T_{k,n}-a_{k,n},\qquad k\in[m].
\]
The selected model and factor order are 
\[
\widehat{K}_{n}=\min\argmax_{1\le k\le m}W_{k,n},\qquad\widehat{q}_{n}=q_{\widehat{K}_{n}}.
\]
Also define the optimal empirical benchmark 
\[
U_{n}=\sup_{\Sigma\in\mathcal{G}_{*}}\widetilde{\ell}_{n}(\Sigma),\qquad a_{n}^{*}=\min\{a_{k,n}:k\in K_{0}\}.
\]

The next theorem is the central result of our work.

\begin{theorem}[Consistency under misspecification]\label{thm:main}
Assume Assumption~\ref{ass:miss}. Let $(a_{k,n})_{k\in[m],n\ge1}$
satisfy 
\begin{enumerate}[label=\textnormal(P\arabic*)]
\item \label{it:P1} $a_{k,n}=o(n)$ for every $k\in[m]$; 
\item \label{it:P2} $a_{k,n}-a_{n}^{*}\to\infty$ for every $k\in K_{*}$
with $q_{k}>q_{*}$; 
\item \label{it:P3} $\log\log n\,/\,(a_{k,n}-a_{n}^{*})\to0$ for every
$k\in K_{*}$ with $q_{k}>q_{*}$. 
\end{enumerate}
Then 
\[
\widehat{q}_{n}\to q_{*}\qquad\text{almost surely.}
\]
\end{theorem}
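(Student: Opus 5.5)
The proof follows the underfit/overfit template. It suffices to show that almost surely, for all large $n$, $W_{k,n}<\max_{j\in K_0}W_{j,n}$ for every $k\notin K_0$ with $q_k\neq q_*$. Models with $q_k=q_*$ give $\widehat q_n=q_*$ regardless. The comparison benchmark is $U_n$. Fix $k_0\in K_0$ attaining $a_n^*$; since $\mathcal{G}_*=\mathcal{G}_{k_0}\subset\mathcal{F}_{k_0}$ by \ref{it:M2}, we have $T_{k_0,n}\ge U_n$, so $W_{k_0,n}\ge U_n-a_n^*$. The common ingredients are the following facts:
\begin{itemize}
\item[(i)] By \ref{it:M1} and the strong law, $S_n\to\Sigma_0$ almost surely.
\item[(ii)] By the law of the iterated logarithm applied to the finitely many coordinates of $X_tX_t^\top$ (finite variance by the fourth moment) and to $\bar X_n$, we have $\|S_n-\Sigma_0\|_F=O(\sqrt{\log\log n/n})$ almost surely.
\end{itemize}
Write $\widetilde\ell_n(\Sigma)=nQ(\Sigma)-\frac n2\tr((S_n-\Sigma_0)\Sigma^{-1})$, an exact identity.

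\emph{Underfits} ($k\notin K_*$, so $V_k<V_*$). By Proposition~\ref{prop:compact}, $\Sigma^{-1}$ is uniformly bounded on $\mathcal{K}$. Hence
\[
\sup_{\Sigma\in\mathcal{K}}\bigl|n^{-1}\widetilde\ell_n(\Sigma)-Q(\Sigma)\bigr|\le C\|S_n-\Sigma_0\|_F\to0\quad\text{a.s.}
\]
Then $n^{-1}T_{k,n}\to V_k$ and $n^{-1}U_n\to V_*$. Because $a_{j,n}=o(n)$ by \ref{it:P1}, we get $n^{-1}(W_{k,n}-W_{k_0,n})\le V_k-V_*+o(1)<0$ eventually. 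This covers every non-optimal model, whatever its order.

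\emph{Exact overfits} ($k\in K_*$, $q_k>q_*$). This is the main step. I will show that $T_{k,n}-U_n=O(\log\log n)$ almost surely. Combined with \ref{it:P2}--\ref{it:P3}, this gives
\[
W_{k,n}-W_{k_0,n}\le O(\log\log n)-(a_{k,n}-a_n^*)\to-\infty.
\]
Let $\widehat\Sigma_k$ maximize $\widetilde\ell_n$ over $\mathcal{F}_k$. Let $\Pi(\Sigma)\in\mathcal{G}_*$ be a nearest point of the compact set $\mathcal{G}_*$, and put $\delta=\dist(\Sigma,\mathcal{G}_*)$. Then
\[
\widetilde\ell_n(\Sigma)-U_n\le\widetilde\ell_n(\Sigma)-\widetilde\ell_n(\Pi(\Sigma))
= n\{Q(\Sigma)-V_*\}-\tfrac n2\tr\bigl((S_n-\Sigma_0)(\Sigma^{-1}-\Pi(\Sigma)^{-1})\bigr).
\]
The Lipschitz property of inversion on $\mathcal{K}$ (Proposition~\ref{prop:compact}) bounds the second term by $Cn\|S_n-\Sigma_0\|_F\,\delta$. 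Consistency comes first: the uniform convergence above, together with $T_{k,n}\ge U_n$, gives $Q(\widehat\Sigma_k)\to V_*$. Compactness and $\mathcal{G}_k=\mathcal{G}_*$ then give $\dist(\widehat\Sigma_k,\mathcal{G}_*)\to0$ almost surely, so eventually $\widehat\Sigma_k$ lies in the $\eta_k$-neighbourhood where \ref{it:M3} applies. There,
\[
T_{k,n}-U_n\le\sup_{\delta\ge0}\bigl\{-c_kn\delta^2+Cn\|S_n-\Sigma_0\|_F\delta\bigr\}=\frac{C^2n\|S_n-\Sigma_0\|_F^2}{4c_k}=O(\log\log n)
\]
by (ii).

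The anticipated obstacle is precisely this localization. The point is that the linear contrast is centered at the projection $\Pi(\Sigma)$, not at a fixed point, so no chaining over $\mathcal{F}_k$ is needed. Only the fixed-dimensional LIL for $S_n$ is used. Care is needed for two details. First, $\Pi$ may be non-unique, so I will use any measurable selection, or simply the inequality for each $\Gamma\in\mathcal{G}_*$ achieving the distance. Second, the $\bar X_n$ correction in $S_n$ is $O(\log\log n/n)$ and so harmless. Finally, since $m$ is finite, the almost-sure eventual statements combine over $k$. This yields $\widehat K_n$ with $q_{\widehat K_n}=q_*$ for all large $n$, i.e. $\widehat q_n\to q_*$ almost surely.
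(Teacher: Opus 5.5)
Your proposal is correct and follows essentially the paper's proof. It uses the same benchmark $U_n-a_n^*$, the same linear-in-$n$ rejection of non-optimal models via the exact identity $\widetilde\ell_n(\Sigma)=nQ(\Sigma)-\frac{n}{2}\tr\bigl((S_n-\Sigma_0)\Sigma^{-1}\bigr)$ and the LIL, and the same projection-centred contrast combined with the quadratic margin to get $T_{k,n}-U_n=O(\log\log n)$ for exact overfits. The only difference is cosmetic: you first localize the maximizer $\widehat\Sigma_k$ into the $\eta_k$-neighbourhood of $\mathcal{G}_*$ and complete the square at that single point, whereas the paper splits $\mathcal{F}_k$ into near and far regions and shows the far-region bound is eventually negative; both rest on the same compactness argument giving a positive gap away from $\mathcal{G}_*$.
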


The proof follows an empirical process-based underfit/overfit template,
conceptually close to \citet{Keribin2000} and implemented through
compact covariance-space localization in the spirit of \citet{vanDeGeer2000}.
First, suboptimal models are excluded because their population criterion
lies strictly below $V_{*}$, and the resulting empirical likelihood
loss is linear in $n$. Second, exact overfits are controlled by the
common-projection property and the local quadratic margin, which imply
that their empirical likelihood gain is at most $O(\log\log n)$ almost
surely. Any penalty system whose relevant gaps $a_{k,n}-a_{n}^{*}$
for $k\in K_{*}$ with $q_{k}>q_{*}$ dominate $\log\log n$, while
all penalties remain $o(n)$, therefore forces eventual selection of
a minimal-order optimal model.

\subsection{Consistency of the BIC}

To apply Theorem~\ref{thm:main} to BIC, let $d_{k}>0$
be a fixed covariance-complexity count for model $k$. The profiled
BIC score is equivalent to maximizing 
\[
T_{k,n}-\frac{1}{2}d_{k}\log n,
\]
because the common mean contribution $p\log n/2$ is the same for
every candidate model. Define 
\[
d_{*}=\min\{d_{k}:k\in K_{0}\},\qquad K_{**}=\{k\in K_{0}:d_{k}=d_{*}\}.
\]

\begin{corollary}[BIC consistency]\label{cor:bic} Under Assumption~\ref{ass:miss},
suppose the BIC complexity system is order-separating at the pseudo-true
order in the sense that 
\[
d_{k}>d_{*}\qquad\text{for every }k\in K_{*}\text{ with }q_{k}>q_{*}.
\]
Then the BIC-selected order is strongly consistent: 
\[
\widehat{q}_{n}^{\mathrm{BIC}}\to q_{*}\qquad\text{almost surely.}
\]
\end{corollary}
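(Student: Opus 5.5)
The plan is to deduce Corollary~\ref{cor:bic} directly from Theorem~\ref{thm:main} by setting $a_{k,n}=\frac{1}{2}d_{k}\log n$ and checking \ref{it:P1}--\ref{it:P3}.

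First, I will justify the reduction to the profiled criterion. By Proposition~\ref{prop:profile}, the full unknown-mean BIC score of model $k$ equals $T_{k,n}-\frac{1}{2}(d_{k}+p)\log n$. The common shift $\frac{1}{2}p\log n$ changes neither the argmax set nor the tie-breaking rule $\min\argmax$. Hence $\widehat{q}_{n}^{\mathrm{BIC}}$ coincides with $\widehat{q}_{n}$ for the penalties $a_{k,n}=\frac{1}{2}d_{k}\log n$.

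Next comes the verification of the penalty conditions.
\begin{enumerate}[label=(\roman*)]
\item Condition \ref{it:P1} holds because each $d_{k}$ is a fixed finite number, so $a_{k,n}=O(\log n)=o(n)$.
\item For the remaining conditions, note that $a_{n}^{*}=\min_{k\in K_{0}}\frac{1}{2}d_{k}\log n=\frac{1}{2}d_{*}\log n$, since $\log n>0$ for $n\ge2$. Take any exact overfit, that is, $k\in K_{*}$ with $q_{k}>q_{*}$. Then
\[
a_{k,n}-a_{n}^{*}=\tfrac{1}{2}(d_{k}-d_{*})\log n,
\]
and $d_{k}-d_{*}>0$ by the order-separating hypothesis. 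This gap is a positive constant multiple of $\log n$, so it tends to infinity, which is \ref{it:P2}.
\item For \ref{it:P3}, the same formula gives
\[
\frac{\log\log n}{a_{k,n}-a_{n}^{*}}=\frac{2}{d_{k}-d_{*}}\cdot\frac{\log\log n}{\log n}\to0.
\]
\end{enumerate}
There are only finitely many exact overfits, so the order-separation constants are bounded away from zero uniformly, although each condition is in any case required only model by model.

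Assumption~\ref{ass:miss} is assumed verbatim, so Theorem~\ref{thm:main} applies and yields $\widehat{q}_{n}^{\mathrm{BIC}}\to q_{*}$ almost surely. I do not expect any genuine obstacle. The only points needing care are that $a_{n}^{*}$ is indeed attained at $d_{*}$, and that the strict inequality $d_{k}>d_{*}$ is what yields a gap of order $\log n$, which dominates the $O(\log\log n)$ overfit gain. All the substantive difficulty sits inside Theorem~\ref{thm:main}.

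One optional strengthening also follows. Once the order is eventually $q_{*}$, models in $K_{0}\setminus K_{**}$ carry a penalty larger by a multiple of $\log n$. The selected model is therefore eventually in $K_{**}$, provided one has the same $O(\log\log n)$ control of $T_{k,n}-U_{n}$ for these models. That control is a byproduct of the theorem's proof and is not needed for the stated corollary.
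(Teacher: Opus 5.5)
Your proposal is correct and matches the paper's proof: both set $a_{k,n}=\tfrac{1}{2}d_{k}\log n$, observe that $a_{n}^{*}=\tfrac{1}{2}d_{*}\log n$, verify \ref{it:P1}--\ref{it:P3} from the strict gap $d_{k}>d_{*}$, and then invoke Theorem~\ref{thm:main}. Your remarks on the common mean shift $\tfrac{1}{2}p\log n$ and on eventual selection within $K_{**}$ (via the $O(\log\log n)$ bound of Lemma~\ref{lem:exactoverfit} applied to $K_{0}\setminus K_{**}$) also agree with the paper.
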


\begin{remark}[Model-index interpretation]\label{rem:Kstarstar}
The order conclusion of Corollary~\ref{cor:bic} is the main selection
statement. A sharper model-index conclusion is that 
\[
\mathbb{P}\left(\widehat{K}_{n}^{\mathrm{BIC}}\in K_{**}\text{ for all sufficiently large }n\right)=1.
\]
Thus the BIC eventually selects from the class of globally optimal
models that have both the minimal optimal order and the smallest complexity
count within that order. If $K_{**}$ is a singleton, then the model
index itself is strongly consistent. If $|K_{**}|>1$, then the candidate
family contains several asymptotically indistinguishable models with
the same optimal order and the same BIC complexity. In that case
the theory guarantees only eventual selection within $K_{**}$; unique
model-index consistency would require $K_{**}$ to be a singleton
or additional structure that separates those models. \end{remark}

\subsection{Conventional complexity assignments for common structured factor classes}

The theorem is abstract in the sense that any fixed complexity assignment
$d_{k}$ may be used. In practice, one often chooses $d_{k}$ from a
conventional parameter-counting scheme tied to an identification of the
loading model. The theorem itself only needs the resulting numerical
weights. In the broad sense explained in the Introduction, we continue
to refer to the resulting criterion $T_{k,n}-\frac{1}{2}d_{k}\log n$
as BIC.

\paragraph{Dense exploratory classes.}

For the dense support pattern $E_{\mathrm{full},q}=[p]\times[q]$,
a standard lower-triangular loading gauge with positive diagonal entries
in the leading $q\times q$ block yields the conventional loading-coordinate
count
\[
pq-\frac{q(q-1)}{2}.
\]
Appending either $p$ diagonal uniqueness parameters or one spherical
uniqueness parameter gives the numerical weights
\[
d_{q,\mathrm{full},\mathrm{diag}}=pq-\frac{q(q-1)}{2}+p,\qquad d_{q,\mathrm{full},\mathrm{sph}}=pq-\frac{q(q-1)}{2}+1.
\]
The theorem below uses these numbers as fixed complexity weights in
the corresponding BIC formula. The order gaps satisfy 
\[
d_{q+1,\mathrm{full},\tau}-d_{q,\mathrm{full},\tau}=p-q.
\]
Thus whenever $0\le q\le q_{\max}\le p-1$, these numerical weights are strictly
increasing in $q$, so the relevant order-separation condition is automatic
within a fixed error-covariance type.

\begin{corollary}[Dense exploratory families]\label{cor:dense}
Consider the dense exploratory family of orders $q=0,1,\dots,q_{\max}\le p-1$
with either diagonal error covariance matrices for all models or spherical
error covariance matrices for all models. Equip the models with the
corresponding conventional dense weights displayed above. If Assumption~\ref{ass:miss}
holds for this family, then the corresponding BIC selector satisfies
\[
\widehat{q}_{n}\to q_{*}\qquad\text{almost surely.}
\]
\end{corollary}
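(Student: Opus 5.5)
The plan is to deduce Corollary~\ref{cor:dense} directly from Corollary~\ref{cor:bic}. Assumption~\ref{ass:miss} is assumed for the family, so the only thing left to check is the order-separation condition $d_{k}>d_{*}$ for every $k\in K_{*}$ with $q_{k}>q_{*}$. In the dense family there is exactly one model per order $q\in\{0,1,\dots,q_{\max}\}$, and all models share a common error type $\tau$. Hence each model is indexed by its order, and $K_{0}$ is the singleton corresponding to order $q_{*}$. It follows that $d_{*}=d_{q_{*},\mathrm{full},\tau}$ and $K_{**}=K_{0}$.

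Next I will use the telescoping identity already recorded before the corollary,
\[
d_{q,\mathrm{full},\tau}-d_{q_{*},\mathrm{full},\tau}=\sum_{r=q_{*}}^{q-1}(p-r),
\]
valid for any $q>q_{*}$. Every index in this sum satisfies $r\le q-1\le q_{\max}-1\le p-2$, so each summand is at least $2$, and in particular strictly positive. The sum is nonempty because $q>q_{*}$. Therefore $d_{q,\mathrm{full},\tau}>d_{*}$ for every order $q>q_{*}$, and this holds whether or not the model of order $q$ lies in $K_{*}$. This is exactly the order-separation hypothesis of Corollary~\ref{cor:bic}.

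I also need each $d_{k}>0$. The count $pq-q(q-1)/2$ is nonnegative for $q\le p$, and adding $p\ge1$ (diagonal case) or $1$ (spherical case) makes every weight strictly positive. Corollary~\ref{cor:bic} then gives $\widehat{q}_{n}\to q_{*}$ almost surely.

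The only real subtlety is bookkeeping rather than analysis. One must confirm that the common-error-type restriction is what makes each order correspond to a single model, since mixing diagonal and spherical types could break monotonicity. One must also check that the constraint $q_{\max}\le p-1$ is exactly what keeps every gap $p-r$ positive. No probabilistic argument is needed beyond invoking Corollary~\ref{cor:bic}, which in turn rests on Theorem~\ref{thm:main} with $a_{k,n}=\tfrac12 d_{k}\log n$.
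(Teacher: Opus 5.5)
Your proposal is correct and follows the paper's own argument: the paper likewise uses the gap $d_{q+1,\mathrm{full},\tau}-d_{q,\mathrm{full},\tau}=p-q>0$ under $q_{\max}\le p-1$ to get strict monotonicity in $q$. This yields the order-separation condition, and consistency then follows from the BIC penalty $\tfrac12 d_k\log n$. Your telescoping sum and your check that every $d_k>0$ only make explicit the bookkeeping in that same step.
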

\begin{proof}
Fix the common error-covariance type $\tau\in\{\mathrm{diag},\mathrm{sph}\}$.
For dense exploratory models of order $q$, the numerical weight is
\[
d_{q,\mathrm{full},\tau}=pq-\frac{q(q-1)}{2}+c_{\tau},
\]
where $c_{\mathrm{diag}}=p$ and $c_{\mathrm{sph}}=1$. Hence, for
every $0\le q\le q_{\max}-1$, 
\[
d_{q+1,\mathrm{full},\tau}-d_{q,\mathrm{full},\tau}=p-q>0
\]
because $q_{\max}\le p-1$. Thus $q\mapsto d_{q,\mathrm{full},\tau}$ is
strictly increasing on $\{0,1,\dots,q_{\max}\}$. In particular, if $k\in K_{*}$
has $q_{k}>q_{*}$, then $d_{k}>d_{j}$ for every $j\in K_{0}$, so
$d_{k}>d_{*}$. Therefore the order-separation hypothesis needed for the
selector with penalty $\tfrac{1}{2}d_{k}\log n$ is automatic, and Theorem~\ref{thm:main}
yields the stated convergence. 
\end{proof}

\paragraph{Sparse exploratory and confirmatory classes.}

If a support pattern $E\subseteq[p]\times[q]$ together with a chosen
identification scheme yields a local loading chart of dimension $r_{q,E}$,
that is, near a regular admissible loading matrix the constrained
loading manifold can be parametrized by $r_{q,E}$ free local coordinates
after imposing a gauge that removes rotational indeterminacy, then
one may use 
\[
d(q,E,\mathrm{diag})=r_{q,E}+p,\qquad d(q,E,\mathrm{sph})=r_{q,E}+1.
\]
For example, in a two-factor model whose support allows a $2\times2$
anchor block, one may impose on that block the local lower-triangular
gauge
\[
\left(\begin{array}{cc}
\lambda_{11} & 0\\
\lambda_{21} & \lambda_{22}
\end{array}\right),\qquad \lambda_{11}>0,\ \lambda_{22}>0.
\]
Near a regular loading with $\lambda_{11}\lambda_{22}\neq0$, this removes
the one-dimensional rotational indeterminacy, and the remaining supported
entries provide local coordinates.
If one prefers not to commit to a particular identification gauge,
one may instead use the raw support count 
\[
d(q,E,\mathrm{diag})=|E|+p,\qquad d(q,E,\mathrm{sph})=|E|+1.
\]
The latter is conservative when rotational nonidentifiability remains.
Nevertheless it is still covered by Theorem~\ref{thm:main}, because
the argument only requires a fixed complexity system whose penalty
gaps separate the relevant optimal classes.

\begin{corollary}[Sparse and structured loadings]\label{cor:sparse}
Consider any finite family of sparse exploratory or confirmatory factor-analysis
covariance classes of the form described above. Let $d_{k}$ be any
fixed complexity assignment, for example an identified-chart count
when such a chart is available, or a raw-support count. If this assignment is order-separating at
the pseudo-true order, then the corresponding BIC selector satisfies 
\[
\widehat{q}_{n}\to q_{*}\qquad\text{almost surely.}
\]
Moreover the selected model eventually belongs almost surely to the
minimal-complexity optimal class $K_{**}$. \end{corollary}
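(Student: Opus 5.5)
This corollary reduces to Theorem~\ref{thm:main} with $a_{k,n}=\frac{1}{2}d_{k}\log n$; the model-index refinement needs one extra comparison inside $K_{0}$.

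\textbf{Conditions (P1)--(P3).}
\begin{itemize}
\item Condition \ref{it:P1} holds because each $d_{k}$ is a fixed finite number, so $a_{k,n}=O(\log n)=o(n)$.
\item By definition of $d_{*}$, we have $a_{n}^{*}=\min\{a_{k,n}:k\in K_{0}\}=\frac{1}{2}d_{*}\log n$.
\item For any exact overfit $k\in K_{*}$ with $q_{k}>q_{*}$, the order-separation hypothesis gives $\delta_{k}:=d_{k}-d_{*}>0$. Hence $a_{k,n}-a_{n}^{*}=\frac{1}{2}\delta_{k}\log n\to\infty$, which is \ref{it:P2}.
\item Also $\log\log n/(\frac{1}{2}\delta_{k}\log n)\to0$, which is \ref{it:P3}.
\end{itemize}
Theorem~\ref{thm:main} then gives $\widehat{q}_{n}\to q_{*}$ almost surely. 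Since $q_{k}$ takes finitely many integer values, this means $\widehat{K}_{n}\in\{k:q_{k}=q_{*}\}$ eventually.

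\textbf{Strengthening to $K_{**}$.} The plan is to reuse the two almost-sure facts that the proof of Theorem~\ref{thm:main} establishes.
\begin{enumerate}
\item[(i)] For $k\notin K_{*}$, we have $T_{k,n}-U_{n}\le -c n$ eventually, for some $c>0$.
\item[(ii)] For $k\in K_{*}$, we have $0\le T_{k,n}-U_{n}\le C\log\log n$ eventually. The lower bound holds because $\mathcal{G}_{*}=\mathcal{G}_{k}\subset\mathcal{F}_{k}$ by \ref{it:M2}. The upper bound comes from the margin \ref{it:M3} combined with a law of the iterated logarithm for $S_{n}-\Sigma_{0}$, which uses the fourth moments in \ref{it:M1}.
\end{enumerate}
Fix $j\in K_{**}$. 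The three classes of competitors are handled as follows.
\begin{itemize}
\item Suboptimal models $k\notin K_{*}$: by (i) and (ii), $W_{k,n}-W_{j,n}\le -cn+O(\log n)<0$ eventually.
\item Exact overfits: by (ii), $W_{k,n}-W_{j,n}\le C\log\log n-\frac{1}{2}\delta_{k}\log n<0$ eventually.
\item Models $k\in K_{0}\setminus K_{**}$: these have $d_{k}>d_{*}$, so the same bound applies with $\delta_{k}=d_{k}-d_{*}>0$. No order-separation is needed here, since $d_{*}$ is defined as the minimum over $K_{0}$.
\end{itemize}
Thus, on a single probability-one event (a finite intersection over $k\in[m]$), every $k\notin K_{**}$ is eventually strictly beaten by $j$. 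Hence $\widehat{K}_{n}\in K_{**}$ for all large $n$.

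\textbf{Main obstacle.} The only non-routine ingredient is the $O(\log\log n)$ bound in (ii) for $k\in K_{*}$, in particular for models in $K_{0}\setminus K_{**}$. The theorem's statement only records the order conclusion, so this bound must be taken from the theorem's proof. I expect that proof to show it uniformly for all $k\in K_{*}$, since \ref{it:M2}--\ref{it:M3} are assumed for every such $k$, not only for overfits. If it is only proved for overfits, I would repeat the localization argument verbatim for $k\in K_{0}$:
\begin{itemize}
\item consistency of the maximizer towards $\mathcal{G}_{*}$;
\item a quadratic-margin peeling bound giving $\sup_{\mathcal{F}_{k}}\widetilde{\ell}_{n}-U_{n}\lesssim n\|S_{n}-\Sigma_{0}\|_{F}^{2}$;
\item the LIL giving $n\|S_{n}-\Sigma_{0}\|_{F}^{2}=O(\log\log n)$ almost surely.
\end{itemize}
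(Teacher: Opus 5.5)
Your proposal is correct and follows the paper's route: verify (P1)--(P3) for $a_{k,n}=\tfrac{1}{2}d_{k}\log n$, invoke Theorem~\ref{thm:main}, and obtain the $K_{**}$ refinement by repeating the comparison for models in $K_{0}\setminus K_{**}$, whose penalty gap is $\tfrac{1}{2}(d_{k}-d_{*})\log n$ against an $O(\log\log n)$ likelihood gain. Your expectation is also right: Lemma~\ref{lem:exactoverfit} is stated for every $k\in K_{*}$, not only for exact overfits, so the fallback localization argument is unnecessary.
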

\begin{proof}
Apply Theorem~\ref{thm:main} with the penalty system $a_{k,n}=\tfrac{1}{2}d_{k}\log n$.
The assumed order-separation property gives conditions \ref{it:P2}--\ref{it:P3},
and \ref{it:P1} is immediate because $\log n=o(n)$. The final statement
about eventual membership in $K_{**}$ follows from the same model-by-model
comparison used in the proof of Corollary~\ref{cor:bic}. 
\end{proof}

\subsection{Other admissible penalties}

\label{subsec:penalties}

Theorem~\ref{thm:main} is not tied to the usual BIC multiplier
$(1/2)\log n$. What matters is the size of the penalty gap between
higher-order optimal exact overfits and the minimal optimal order.
In particular, the theorem permits both different complexity scalings
across models and different sample-size scalings across $n$.

\begin{corollary}[Separable penalty systems]\label{cor:separable}
Under Assumption~\ref{ass:miss}, let 
\[
a_{k,n}=b_{n}c_{k},\qquad k\in[m],
\]
where $c_{k}>0$ is a fixed complexity score and $(b_{n})_{n\ge1}$
is a deterministic real sequence. Define 
\[
c_{*}=\min\{c_{k}:k\in K_{0}\}.
\]
If 
\[
b_{n}\to\infty,\qquad b_{n}=o(n),\qquad\frac{\log\log n}{b_{n}}\to0,
\]
and 
\[
c_{k}>c_{*}\qquad\text{for every }k\in K_{*}\text{ with }q_{k}>q_{*},
\]
then conditions \ref{it:P1}--\ref{it:P3} hold and therefore 
\[
\widehat{q}_{n}\to q_{*}\qquad\text{almost surely.}
\]
\end{corollary}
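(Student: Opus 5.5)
The plan is to verify conditions \ref{it:P1}--\ref{it:P3} of Theorem~\ref{thm:main} for the penalty system $a_{k,n}=b_{n}c_{k}$ and then invoke that theorem. Because $b_{n}\to\infty$, we have $b_{n}>0$ for all sufficiently large $n$. Each of \ref{it:P1}--\ref{it:P3} is an asymptotic statement, and the selector $\widehat{K}_{n}$ does not depend on finitely many initial values of the penalties. So the possible nonpositivity of $b_{n}$ for small $n$ is harmless. If one prefers, $b_{n}$ can be replaced by $\max(b_{n},1)$ for the finitely many exceptional $n$ without affecting any almost-sure eventual statement.

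For \ref{it:P1}, I will fix $k\in[m]$. Since $c_{k}$ is a fixed constant and $b_{n}=o(n)$, it follows that $a_{k,n}/n=c_{k}\,b_{n}/n\to0$.

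For \ref{it:P2} and \ref{it:P3}, the first step is to identify $a_{n}^{*}$. For all large $n$ we have $b_{n}>0$, so $a_{n}^{*}=\min_{j\in K_{0}}b_{n}c_{j}=b_{n}c_{*}$. Then, for $k\in K_{*}$ with $q_{k}>q_{*}$,
\[
a_{k,n}-a_{n}^{*}=b_{n}(c_{k}-c_{*}),
\]
and $\delta_{k}:=c_{k}-c_{*}>0$ by the separation hypothesis. Since $b_{n}\to\infty$, we get $a_{k,n}-a_{n}^{*}=\delta_{k}b_{n}\to\infty$, which is \ref{it:P2}. For \ref{it:P3},
\[
\frac{\log\log n}{a_{k,n}-a_{n}^{*}}=\frac{1}{\delta_{k}}\cdot\frac{\log\log n}{b_{n}}\to0.
\]
Because $K_{*}$ is finite, no uniformity issue arises. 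Theorem~\ref{thm:main} then yields $\widehat{q}_{n}\to q_{*}$ almost surely.

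There is no real obstacle here. The only point needing care is the sign of $b_{n}$ in computing $a_{n}^{*}$: the minimum over $K_{0}$ equals $b_{n}c_{*}$ only when $b_{n}>0$. This is settled by restricting attention to large $n$, as above.
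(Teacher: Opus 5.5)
Your proposal is correct and follows the paper's proof essentially line by line. As in the paper, you verify \ref{it:P1} from $b_n=o(n)$, compute $a_n^*=b_nc_*$ so that $a_{k,n}-a_n^*=b_n(c_k-c_*)$, read off \ref{it:P2}--\ref{it:P3}, and invoke Theorem~\ref{thm:main}. You also note that $a_n^*=b_nc_*$ needs $b_n>0$, which holds eventually since $b_n\to\infty$ and suffices because \ref{it:P1}--\ref{it:P3} are limit statements; this is a small point of care that the paper's proof passes over silently.
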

\begin{proof}
Condition \ref{it:P1} follows from $b_{n}=o(n)$. Also 
\[
a_{n}^{*}=b_{n}\min_{j\in K_{0}}c_{j}=b_{n}c_{*}.
\]
Hence, for every $k\in K_{*}$ with $q_{k}>q_{*}$, 
\[
a_{k,n}-a_{n}^{*}=b_{n}(c_{k}-c_{*}).
\]
Because $c_{k}-c_{*}>0$, the divergence $b_{n}\to\infty$ yields
\ref{it:P2}, and the condition $\log\log n=o(b_{n})$ yields \ref{it:P3}.
The conclusion is then immediate from Theorem~\ref{thm:main}. 
\end{proof}
Corollary~\ref{cor:separable} shows that the present theory covers
a rich family of BIC-type penalties. Ordinary BIC corresponds to $b_{n}=(1/2)\log n$
and $c_{k}=d_{k}$. Several penalties discussed by \citet{Huang2017}
translate to our maximization scale as follows. Bozdogan's consistent
AIC (CAIC) \citep{Bozdogan1987} uses 
\[
a_{k,n}=\frac{1}{2}d_{k}(\log n+1).
\]
Haughton's BIC$^{*}$ \citep{Haughton1988}, often called HBIC, uses
\[
a_{k,n}=\frac{1}{2}d_{k}\log\!\left(n/(2\pi)\right).
\]
Sclove's sample-size adjusted BIC \citep{Sclove1987} uses 
\[
a_{k,n}=\frac{1}{2}d_{k}\log\!\left((n+2)/24\right)
\]
for $n>22$. Each of these penalties is asymptotically equivalent
to the usual BIC penalty and therefore falls under Corollary~\ref{cor:separable}.
More generally, any BIC-type modification with multiplier $b_{n}\asymp\log n$
is covered, as are alternative logarithmic penalties such as $b_{n}=(\log n)^{\alpha}$
with any fixed $\alpha>0$ (cf. \citealp{Keribin2000}). By contrast, AIC's constant penalty is
excluded, and the classical Hannan--Quinn-type penalty \citep{HannanQuinn1979}
\[
a_{k,n}=d_{k}\log\log n
\]
lies exactly on the boundary of the present
argument rather than inside it.

\section{Discussion}

\label{sec:discussion}

It is useful to sharply contrast our result with neighboring strands
of the literature. The positive results discussed below are tailored
to particular asymptotic regimes, model classes, or inferential targets
rather than to the fixed-dimensional covariance-space setting treated
here. The present paper should be read against that background.

\subsection{Inapplicability of standard regularity arguments in factor models}

The most immediate comparison is with the regular misspecified-likelihood
literature. The model-selection results of \citet{Nishii1988} and
\citet{Vuong1989} are foundational, but they are designed for settings
in which pseudo-true parameters are locally regular enough for the
usual likelihood expansions. The same general pattern appears in \citet{SinWhite1996}
and, more recently, in the broader loss-based framework of \citet{Nguyen2024}:
one needs enough local smoothness
and curvature for likelihood or empirical-risk differences to admit
tractable second-order approximations around pseudo-true minimizers.
Factor analysis is not generically of that type. If the candidate
model adds redundant factors, the lower-order model is embedded in
the higher-order model along a singular stratum where extra loadings
are zero and rotations become unidentified. This is exactly the sort
of latent-variable irregularity highlighted by \citet{ChenMoustakiZhang2020}
in their analysis of likelihood-ratio tests. Their message is broader
than likelihood-ratio tests: once dimensionality comparisons in latent-variable
models violate Wilks regularity, BIC cannot be justified
by a routine appeal to regular asymptotics.

That observation also explains why one must be careful with blanket
statements of the form ``BIC is consistent for factor analysis.''
The answer depends on the asymptotic regime, the exact statistical
object being selected, and whether the argument is frequentist or
Bayesian. Our theorem answers only one of these settings: under a fixed observed
dimension, a fixed finite candidate family of compact structured Gaussian
covariance classes, unknown mean, and possible misspecification,
frequentist order selection by BIC penalization with
order-separating complexity counts is consistent.

\subsection{Local singular geometry behind the failure of regularity}

Although the proof never uses the loading parametrization directly,
a local heuristic in loading space helps explain why the regular assumptions
fail. Fix a pseudo-true covariance 
\[
\Sigma^{*}=\Lambda^{*}\Lambda^{*\top}+\Psi^{*}\in\mathcal{G}_{*}.
\]
Suppose an overfitted loading representation of order $q_{*}+r$ still
contains $\Sigma^{*}$ and admits redundant factor directions near
$\Sigma^{*}$. A representative local perturbation is 
\[
\Lambda(\theta)=\left[\Lambda^{*}+\theta A,\ \sqrt{\theta}\,B\right],\qquad\Psi(\theta)=\Psi^{*}+\theta\Delta,
\]
for small $\theta>0$. Then 
\[
\Lambda(\theta)\Lambda(\theta)^{\top}+\Psi(\theta)-\Sigma^{*}=\theta\left(\Lambda^{*}A^{\top}+A\Lambda^{*\top}+BB^{\top}+\Delta\right)+\theta^{2}AA^{\top}.
\]
Thus redundant factors appear at $\sqrt{\theta}$ scale in the loadings
but only at order $\theta$ in covariance space. This is the local
conic phenomenon familiar from singular model selection. To make the
algebra concrete in a dense model, suppose for the remainder of this
paragraph that $\Psi^{*}=\diag(\psi_{1}^{*},\dots,\psi_{p}^{*})$
is diagonal. The nonuniqueness can then be made completely explicit.
Take $A=0$, choose any nonzero scalar $b$, and let the redundant
block consist of a single extra column $B=be_{j}$, where $e_{j}$
is the $j$th coordinate vector in $\mathbb{R}^{p}$. Then $BB^{\top}=b^{2}e_{j}e_{j}^{\top}$
is diagonal. If we choose 
\[
\Delta=-b^{2}e_{j}e_{j}^{\top},
\]
then for every $\theta\in(0,\psi_{j}^{*}/b^{2})$ one has 
\[
\Lambda(\theta)=\left[\Lambda^{*},\sqrt{\theta}\,be_{j}\right],\qquad\Psi(\theta)=\Psi^{*}-\theta b^{2}e_{j}e_{j}^{\top}\succ0,
\]
and therefore 
\[
\Lambda(\theta)\Lambda(\theta)^{\top}+\Psi(\theta)=\Sigma^{*}.
\]
So the same covariance matrix is represented by a whole one-parameter
family of higher-order loading matrices, even within a diagonal-uniqueness
model. More complicated families arise from rotations and from redundant-factor
splittings. Unique pseudo-true parameters, nonsingular Hessians, and
ordinary quadratic likelihood expansions in loading coordinates can
therefore fail exactly where regular-model BIC arguments would need
them. Working directly in covariance space absorbs that singularity
into the geometry of the compact covariance classes, which is why
the present proof is organized there rather than in loading coordinates.

\subsection{Existing positive results in related factor settings}

\paragraph{Large-panel approximate factor models.}

The asymptotic regime of \citet{BaiNg2002}, \citet{AmengualWatson2007},
and \citet{ChoiJeong2019} is fundamentally different from ours. In
that literature, $N$ denotes the number of observed series and $T$
the number of time periods, and the consistency arguments require
both dimensions to diverge. \citet{BaiNg2002} treat static approximate
factor models, \citet{AmengualWatson2007} study the number of dynamic
factors in large panels, and \citet{ChoiJeong2019} derive several
likelihood-based information criteria for approximate factor models
in the same large-panel setting. The same caveat applies to closely
related contributions: \citet{HallinLiska2007} propose a spectral
information criterion for the number of dynamic shocks in the general
dynamic factor model; \citet{AlessiBarigozziCapasso2010} refine the
penalty of \citet{BaiNg2002} by tuning its multiplicative constant;
\citet{Onatski2010} use clustering of the largest idiosyncratic eigenvalues
near the upper edge of the sample spectrum; and \citet{AhnHorenstein2013}
select factor number by maximizing ratios of adjacent eigenvalues.
Critically, the model selection guarantees from these works do not
directly translate to the fixed-$p$ Gaussian covariance classes
of the form $\Lambda\Lambda^{\top}+\Psi$ with explicit structural
zero restrictions and BIC penalization studied here.

\paragraph{Structural equation modeling and confirmatory factor analysis.}

The asymptotic analysis in \citet{Huang2017} is directly relevant
because confirmatory factor analysis (CFA) is a special case of structural
equation modeling. That work studies AIC, BIC, and RMSEA under weak
distributional assumptions and defines the target through population
discrepancy minimization over a finite list of candidate models. It
further shows, among other things, that BIC can select the most parsimonious
model under nested selection but need not do so for general non-nested
ties. Our target is different: the minimal factor order inside explicit
covariance classes $\mathcal{F}_{k}$, with the proof designed to
survive singular loading geometry by operating directly in covariance
space. The theorem in \citet{Huang2017} is therefore a useful benchmark,
but it does not directly replace the covariance-space result proved here.

\paragraph{High-dimensional rank selection.}

The theorem of \citet{MorimotoHungHuang2026} unifies consistency
results for information-criterion-based rank estimators under random-matrix
asymptotics. That contribution is closely related in spirit, but it
concerns a different notion of model complexity via rank, defined
through high-dimensional spiked spectral behavior when both $p$ and
$n$ diverge and gap conditions hold. Our pseudo-true order is instead
defined through Gaussian Kullback--Leibler projection inside a fixed
family of structured covariance classes. The random-matrix theory
is therefore complementary rather than directly applicable in the
fixed-dimensional setting considered here.

\subsection{PanIC and generic empirical-risk penalties}

A natural comparison is with recent general theories for penalized
empirical-risk selection. \citet{SinWhite1996} prove consistency
results for penalized likelihood criteria in broad parametric settings,
and \citet{Baudry2015} relax part of that framework in a general
model-selection setting. More recently, \citet{Nguyen2024} develops
the PanIC framework, which gives general sufficient conditions for
consistent information criteria in loss-based model selection and
also provides broader sufficient conditions for BIC-like criteria
than were previously available. In a related minimum-empirical-risk
setting, \citet{WesterhoutEtAl2024} introduce the SWIC and show that
the usual AIC and BIC penalties do not satisfy their generic sufficient
condition.

These papers are relevant because they show that consistent selection
can often be recovered without exploiting model-specific geometry,
provided one uses penalties calibrated to the generic empirical-risk
fluctuation scale. In the minimum-empirical-risk normalization of
\citet{WesterhoutEtAl2024}, for instance, BIC contributes
a penalty of order $(\log n)/n$, whereas their SWIC penalty is of
order $(\log n)/\sqrt{n}$. The present theorem instead exploits the
special covariance geometry of structured factor analysis to show
that, once all globally optimal models share a common pseudo-true
covariance set, $Q$ has a local quadratic margin, and the BIC complexity
counts are order-separating at the pseudo-true order, the much lighter
BIC penalty already suffices.

\subsection{The AIC within our framework}

Within the context of our work, the AIC selects the model that maximizes
\[
T_{k,n}-d_{k}.
\]
Hence the penalty gap between a higher-order optimal model $k\in K_{*}$
and the minimal optimal class is simply 
\[
d_{k}-d_{*},
\]
which is constant in $n$. Consequently, whenever there exist optimal
exact overfits with $q_{k}>q_{*}$, conditions \ref{it:P2}--\ref{it:P3}
of Theorem~\ref{thm:main} fail for AIC. At the same time, Lemma~\ref{lem:exactoverfit}
gives only 
\[
T_{k,n}-U_{n}=O(\log\log n)\qquad\text{a.s.}
\]
for such exact overfits. A constant penalty therefore cannot dominate
the fluctuation scale that drives our BIC proof.

This is consistent with the familiar role of AIC in the broader model-selection
literature: AIC is designed around predictive efficiency rather than
consistent recovery of the smallest correct or pseudo-true model;
see \citet{Akaike1973}, \citet{Huang2017}, and \citet{ChoiJeong2019}.
In the present setting, one should likewise not expect a minimal-order
consistency theorem from AIC. We do not prove a separate overestimation
theorem here, but the rate comparison above shows why AIC falls outside
the mechanism that makes BIC consistent in Theorem~\ref{thm:main}.

\subsection{Regularization and penalized estimation}

Similar distinctions of aims appear in the regularization literature.
\citet{HiroseImada2018} study penalized maximum likelihood in factor
regression, with a lasso-type penalty on the loadings and an additional
penalty on diagonal error variances, to address three concrete estimation
problems: instability when the number of variables is large relative
to the sample size, insufficient sparsity from rotation-based maximum
likelihood estimation, and multicollinearity or improper solutions
caused by very small error-variance estimates. \citet{HiroseTerada2023}
propose the prenet penalty, whose aim is not merely sparsity but interpretable
simple structure; mild penalization approximates quartimin-type rotations,
while very large penalization yields perfect simple structure, and
AIC/BIC are then used to tune the regularization parameter or to exploit
zero columns in the loading matrix. These papers therefore focus on
penalized estimation and structural simplification rather than on the fixed-dimensional
order-consistency theorem proved here for BIC penalization
under the covariance-space assumptions and order-separating complexity
assignments adopted here.

\subsection{Singular learning theory}

The singular-model literature is perhaps the strongest reason to be
cautious about BIC in factor analysis. \citet{DrtonPlummer2017}
show that factor analysis is singular from the point of view of Bayesian
marginal likelihood and propose sBIC. \citet{Watanabe2013} develops
WBIC as a generally applicable approximation in singular models, and
\citet{DrtonEtAl2025} provide a recent singular learning-theoretic
analysis of factor analysis models. These contributions explain why
the Bayesian interpretation of classical BIC is more delicate in singular settings.

Our theorem does not contradict their results. Instead, it addresses
a different target. The quantities approximated by sBIC and WBIC are
singular Bayesian marginal likelihoods, and their penalties are driven
by learning coefficients rather than by Euclidean parameter counts
alone. The present theorem is frequentist. It says that, under explicit
covariance-space assumptions and an order-separating BIC complexity
assignment, the BIC penalty can still be strongly consistent
for recovering the smallest member among the Gaussian Kullback--Leibler-optimal
classes, even though that target need not coincide with the model
favored by singular Bayesian complexity.

\subsection{Limitations and extensions}

The theorem is intentionally narrow. It is a fixed-$p$, complete-data
result aimed at typical exploratory factor analysis applications.
It does not address missingness, incomplete-data likelihoods, dynamic
factor models, approximate factor panels with $N,T\to\infty$, or
high-dimensional $p/n\to c$ regimes. It also does not claim that
BIC is universally correct for singular Bayesian model selection.
Rather, it isolates the exact structural assumptions under which BIC penalization, together with an order-separating BIC complexity
assignment, still yields a strong frequentist order consistency theorem
for a fixed finite candidate family of compact covariance classes
in classical exploratory factor analysis.

The clearest next step is to study whether analogous common-projection
conditions can be verified or replaced in broader latent-variable
settings. Another is to understand the interface between frequentist
consistency and singular Bayesian penalties more precisely, especially
for sparse and confirmatory factor classes. A third is to extend the
present fixed-$p$ theory to incomplete-data settings, where AIC/BIC-type
procedures already exist computationally but where a theorem of the
present form would require new arguments. Finally, Section~\ref{subsec:penalties}
shows that the present argument actually validates a whole consistency
class of penalties, ranging from BIC to heavier sublinear
logarithmic penalties. At present, however, we do not have the sharper
local asymptotic tools needed to identify a canonical or minimal optimal
penalty within that class, or to describe the finite-sample trade-offs
among its members. In that sense, the theorem should be read as a
characterization of when consistency holds, rather than as a prescription
for which consistent penalty is best.

\appendix
%dummy comment inserted by tex2lyx to ensure that this paragraph is not empty

\section{Proofs of preliminary propositions}

\subsection{Proof of Proposition~\ref{prop:compact}}
\begin{proof}[Proof of Proposition~\ref{prop:compact}]
The loading class $\mathcal{L}_{k}$ is closed and bounded in the
Euclidean space $\mathbb{R}^{p\times q_{k}}$, hence compact. The
same is true of each error-covariance class $\mathcal{P}_{\tau_{k}}$.
The map 
\[
(\Lambda,\Psi)\mapsto\Lambda\Lambda^{\top}+\Psi
\]
is continuous, so $\mathcal{F}_{k}$ is compact as the continuous
image of a compact set. The finite union $\mathcal{K}$ is therefore
compact as well. For every $\Sigma=\Lambda\Lambda^{\top}+\Psi\in\mathcal{F}_{k}$
and every $v\in\mathbb{R}^{p}$, 
\[
v^{\top}\Sigma v=\|\Lambda^{\top}v\|^{2}+v^{\top}\Psi v\ge\underline{\psi}\|v\|^{2},
\]
which yields $\Sigma\succeq\underline{\psi}I_{p}$. Likewise, 
\[
\|\Sigma\|_{\mathrm{op}}\le\|\Lambda\Lambda^{\top}\|_{\mathrm{op}}+\|\Psi\|_{\mathrm{op}}\le\|\Lambda\|_{F}^{2}+\overline{\psi}\le M^{2}+\overline{\psi}.
\]
Thus $\Sigma\preceq(M^{2}+\overline{\psi})I_{p}$. In particular,
\[
\|\Sigma^{-1}\|_{\mathrm{op}}\le\underline{\psi}^{-1}\qquad\text{for every }\Sigma\in\mathcal{K},
\]
so the inverse map is uniformly bounded on $\mathcal{K}$. Finally,
for $\Sigma,\Gamma\in\mathcal{K}$, 
\[
\Sigma^{-1}-\Gamma^{-1}=\Sigma^{-1}(\Gamma-\Sigma)\Gamma^{-1},
\]
and therefore 
\[
\|\Sigma^{-1}-\Gamma^{-1}\|_{F}\le\|\Sigma^{-1}\|_{\mathrm{op}}\|\Gamma-\Sigma\|_{F}\|\Gamma^{-1}\|_{\mathrm{op}}\le\underline{\psi}^{-2}\|\Gamma-\Sigma\|_{F}.
\]
Hence the inverse map is Lipschitz on $\mathcal{K}$. 
\end{proof}

\subsection{Proof of Proposition~\ref{prop:profile}}
\begin{proof}[Proof of Proposition~\ref{prop:profile}]
Fix $\Sigma\in\mathbb{S}_{++}^{p}$. Expanding around the sample
mean gives 
\[
X_{t}-\mu=(X_{t}-\bar{X}_{n})+(\bar{X}_{n}-\mu),
\]
and therefore 
\begin{align*}
\sum_{t=1}^{n}(X_{t}-\mu)^{\top}\Sigma^{-1}(X_{t}-\mu) & =\sum_{t=1}^{n}(X_{t}-\bar{X}_{n})^{\top}\Sigma^{-1}(X_{t}-\bar{X}_{n})+n(\mu-\bar{X}_{n})^{\top}\Sigma^{-1}(\mu-\bar{X}_{n}),
\end{align*}
with the cross-term vanishing because $\sum_{t=1}^{n}(X_{t}-\bar{X}_{n})=0$.
Since $\Sigma^{-1}$ is positive definite, the second term is uniquely
minimized at $\mu=\bar{X}_{n}$. Substituting this maximizer into
the likelihood yields 
\[
\widetilde{\ell}_{n}(\Sigma)=-\frac{n}{2}\left\{\log\det\Sigma+\tr(S_{n}\Sigma^{-1})\right\}.
\]
The continuity of $\widetilde{\ell}_{n}$ on the compact set $\mathcal{F}_{k}$
then shows that $T_{k,n}$ is attained for every $k\in[m]$. 
\end{proof}

\section{Auxiliary lemmas and technical results}

\subsection{Population criterion and profiling over the mean}

If $\mathbb{E}\|X_{1}\|^{2}<\infty$ and 
\[
\ell_{1}(\mu,\Sigma)=-\frac{1}{2}\left\{\log\det\Sigma+(X_{1}-\mu)^{\top}\Sigma^{-1}(X_{1}-\mu)\right\}
\]
denotes the one-observation Gaussian log-likelihood up to the same
irrelevant additive constant, then 
\[
\Gamma(\mu,\Sigma)=\mathbb{E}[\ell_{1}(\mu,\Sigma)]=-\frac{1}{2}\left\{\log\det\Sigma+\tr(\Sigma_{0}\Sigma^{-1})+(\mu-\mu_{0})^{\top}\Sigma^{-1}(\mu-\mu_{0})\right\}.
\]
In particular, the quasi-likelihood risk $\Gamma(\mu,\Sigma)$ depends
on the data-generating law only through $(\mu_{0},\Sigma_{0})$. This
follows from the identity 
\[
\mathbb{E}[(X_{1}-\mu)(X_{1}-\mu)^{\top}]=\Sigma_{0}+(\mu-\mu_{0})(\mu-\mu_{0})^{\top},
\]
and the trace representation $x^{\top}Ax=\tr(Axx^{\top})$. Profiling
over $\mu$ immediately gives 
\[
Q(\Sigma)=\sup_{\mu\in\mathbb{R}^{p}}\Gamma(\mu,\Sigma)=-\frac{1}{2}\left\{\log\det\Sigma+\tr(\Sigma_{0}\Sigma^{-1})\right\}.
\]

\subsection{Empirical-process bounds}

\begin{lemma}[Iterated-logarithm bounds]\label{lem:lil} Assume
Assumption~\ref{it:M1}. Let 
\[
r_{n}=\sqrt{\frac{\log\log n}{n}}\qquad(n\ge3).
\]
Then 
\[
\|\bar{X}_{n}-\mu_{0}\|=O(r_{n})\quad\text{a.s.},\qquad\|S_{n}-\Sigma_{0}\|_{F}=O(r_{n})\quad\text{a.s.}
\]
\end{lemma}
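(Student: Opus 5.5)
The plan is to reduce both claims to the classical Hartman--Wintner law of the iterated logarithm (LIL) applied coordinatewise. Because $p$ is fixed, all norms on $\mathbb{R}^{p}$ and $\mathbb{S}^{p}$ are equivalent. It therefore suffices to show that each scalar coordinate of $\bar{X}_{n}-\mu_{0}$ and of $S_{n}-\Sigma_{0}$ is $O(r_{n})$ almost surely. A finite union of null sets is null, so we may then take the maximum over the $p+p^{2}$ coordinates.

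For the mean, each coordinate $X_{1j}-\mu_{0j}$ is a centred iid sequence with finite variance, since $\mathbb{E}\|X_{1}\|^{2}<\infty$ follows from \ref{it:M1}. Hartman--Wintner gives
\[
\limsup_{n\to\infty}\frac{|\sum_{t=1}^{n}(X_{tj}-\mu_{0j})|}{\sqrt{2n\log\log n}}=\sqrt{\operatorname{Var}(X_{1j})}<\infty\quad\text{a.s.},
\]
and hence $|\bar{X}_{nj}-\mu_{0j}|=O(r_{n})$ almost surely. If some variance is zero, the coordinate is almost surely constant and the bound is trivial.

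For the covariance, write $Y_{t}=X_{t}-\mu_{0}$ and use the exact decomposition
\[
S_{n}-\Sigma_{0}=\frac{1}{n}\sum_{t=1}^{n}\left(Y_{t}Y_{t}^{\top}-\Sigma_{0}\right)-(\bar{X}_{n}-\mu_{0})(\bar{X}_{n}-\mu_{0})^{\top}.
\]
For the first term, each entry $Y_{ti}Y_{tj}-\Sigma_{0,ij}$ is centred and iid. Its variance is finite because Cauchy--Schwarz gives $\mathbb{E}[Y_{ti}^{2}Y_{tj}^{2}]\le\mathbb{E}\|Y_{t}\|^{4}<\infty$, which is exactly where the fourth-moment condition in \ref{it:M1} is used. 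The scalar LIL then gives $O(r_{n})$ almost surely for every entry. For the second term, the mean bound already proved gives a Frobenius norm of $O(r_{n}^{2})=o(r_{n})$ almost surely. Combining the two terms with the triangle inequality yields $\|S_{n}-\Sigma_{0}\|_{F}=O(r_{n})$ almost surely.

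No step is genuinely hard. The only points needing care are that the LIL is invoked under the minimal second-moment condition of Hartman--Wintner, applied to the products $Y_{ti}Y_{tj}$ through the fourth moment, and that the random centring at $\bar{X}_{n}$ is handled by the decomposition above rather than by applying the LIL to $S_{n}$ directly.
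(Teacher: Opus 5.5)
Your proposal is correct and follows essentially the same route as the paper's proof. Both apply the scalar law of the iterated logarithm coordinatewise to the mean, use the fourth moment to apply it entrywise to $\frac{1}{n}\sum_{t}Y_{t}Y_{t}^{\top}-\Sigma_{0}$, and handle the random centring through the decomposition $S_{n}=\frac{1}{n}\sum_{t}Y_{t}Y_{t}^{\top}-(\bar{X}_{n}-\mu_{0})(\bar{X}_{n}-\mu_{0})^{\top}$, whose second term is $O(r_{n}^{2})$.
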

\begin{proof}
Set $Y_{t}=X_{t}-\mu_{0}$. For each coordinate $j$, the scalar law
of the iterated logarithm gives 
\[
\frac{1}{n}\sum_{t=1}^{n}Y_{tj}=O(r_{n})\qquad\text{a.s.}
\]
Because $p$ is fixed, taking the maximum over finitely many coordinates
yields the vector bound for $\bar{X}_{n}-\mu_{0}$. For the covariance
term, write 
\[
\widetilde{S}_{n}=\frac{1}{n}\sum_{t=1}^{n}Y_{t}Y_{t}^{\top}.
\]
Each entry of $\widetilde{S}_{n}-\Sigma_{0}$ is an average of centered
iid variables of the form $Y_{tj}Y_{t\ell}-\mathbb{E}[Y_{1j}Y_{1\ell}]$.
Assumption~\ref{it:M1} gives $\mathbb{E}\|X_{1}\|^{4}<\infty$,
hence these centered products have finite second moment. The scalar
law of the iterated logarithm therefore yields entrywise $O(r_{n})$
almost surely. Since 
\[
S_{n}=\widetilde{S}_{n}-(\bar{X}_{n}-\mu_{0})(\bar{X}_{n}-\mu_{0})^{\top},
\]
the Frobenius norm of $S_{n}-\Sigma_{0}$ is bounded by the corresponding
norm of $\widetilde{S}_{n}-\Sigma_{0}$ plus $\|\bar{X}_{n}-\mu_{0}\|^{2}=O(r_{n}^{2})$,
which is negligible relative to $r_{n}$. 
\end{proof}
\begin{lemma}[Uniform approximation on compact covariance sets]\label{lem:uniform}
Let $\mathcal{S}\subset\mathbb{S}_{++}^{p}$ be compact. Then there
exists a finite constant $C_{\mathcal{S}}$ such that 
\[
\sup_{\Sigma\in\mathcal{S}}\left|\widetilde{\ell}_{n}(\Sigma)-nQ(\Sigma)\right|\le C_{\mathcal{S}}n\|S_{n}-\Sigma_{0}\|_{F}.
\]
Under Assumption~\ref{it:M1}, 
\[
\sup_{\Sigma\in\mathcal{S}}\left|\widetilde{\ell}_{n}(\Sigma)-nQ(\Sigma)\right|=O\left(\sqrt{n\log\log n}\right)\qquad\text{a.s.}
\]
In particular, 
\[
\sup_{\Sigma\in\mathcal{S}}\left|\frac{1}{n}\widetilde{\ell}_{n}(\Sigma)-Q(\Sigma)\right|\to0\qquad\text{a.s.}
\]
\end{lemma}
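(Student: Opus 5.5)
The plan is to exploit the fact that $\widetilde{\ell}_{n}(\Sigma)-nQ(\Sigma)$ is linear in $S_{n}-\Sigma_{0}$. From Proposition~\ref{prop:profile} and the definition of $Q$,
\[
\widetilde{\ell}_{n}(\Sigma)-nQ(\Sigma)=-\frac{n}{2}\tr\left((S_{n}-\Sigma_{0})\Sigma^{-1}\right),
\]
because the $\log\det\Sigma$ terms cancel exactly. The first claim then reduces to a uniform bound on $\Sigma^{-1}$ over $\mathcal{S}$. By Cauchy--Schwarz for the Frobenius inner product,
\[
\left|\tr\left((S_{n}-\Sigma_{0})\Sigma^{-1}\right)\right|\le\|S_{n}-\Sigma_{0}\|_{F}\,\|\Sigma^{-1}\|_{F}.
\]
The map $\Sigma\mapsto\|\Sigma^{-1}\|_{F}$ is continuous on $\mathbb{S}_{++}^{p}$, and $\mathcal{S}$ is a compact subset of that open cone. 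Hence $B_{\mathcal{S}}=\sup_{\Sigma\in\mathcal{S}}\|\Sigma^{-1}\|_{F}<\infty$, and we may take $C_{\mathcal{S}}=B_{\mathcal{S}}/2$. Equivalently, the smallest eigenvalue is continuous and positive on $\mathcal{S}$, so it is bounded below by some $\lambda_{\mathcal{S}}>0$, which gives $\|\Sigma^{-1}\|_{F}\le\sqrt{p}/\lambda_{\mathcal{S}}$. For $\mathcal{S}\subseteq\mathcal{K}$ one can instead use $\underline{\psi}^{-1}\sqrt{p}$ from Proposition~\ref{prop:compact}. The resulting bound is deterministic and holds for every $n$ and every sample realisation.

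For the almost-sure rate, combine the deterministic bound with Lemma~\ref{lem:lil}. Under Assumption~\ref{it:M1}, $\|S_{n}-\Sigma_{0}\|_{F}=O(\sqrt{\log\log n/n})$ almost surely, so
\[
\sup_{\Sigma\in\mathcal{S}}\left|\widetilde{\ell}_{n}(\Sigma)-nQ(\Sigma)\right|\le C_{\mathcal{S}}\,n\,O\left(\sqrt{\log\log n/n}\right)=O\left(\sqrt{n\log\log n}\right)
\]
almost surely. Dividing by $n$ gives a bound of order $O(\sqrt{\log\log n/n})$, which tends to $0$; this is the final uniform strong law.

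There is no real obstacle. The only point that needs care is that compactness must be taken inside the open cone $\mathbb{S}_{++}^{p}$, not merely in $\mathbb{S}^{p}$, so that $\Sigma^{-1}$ stays uniformly bounded; this is exactly what the hypothesis $\mathcal{S}\subset\mathbb{S}_{++}^{p}$ compact provides. It is also worth noting that the supremum is handled without any empirical-process chaining, because the random part enters only through the single matrix $S_{n}-\Sigma_{0}$.
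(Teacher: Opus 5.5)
Your proposal is correct and follows essentially the same route as the paper: the exact identity $\widetilde{\ell}_{n}(\Sigma)-nQ(\Sigma)=-\frac{n}{2}\tr\bigl((S_{n}-\Sigma_{0})\Sigma^{-1}\bigr)$, Frobenius Cauchy--Schwarz, boundedness of $\|\Sigma^{-1}\|_{F}$ on the compact set $\mathcal{S}$, and then Lemma~\ref{lem:lil} for the almost-sure rate. You are slightly more explicit than the paper about the constant ($C_{\mathcal{S}}=\tfrac12\sup_{\mathcal{S}}\|\Sigma^{-1}\|_{F}$) and about why compactness must be taken inside $\mathbb{S}_{++}^{p}$.
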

\begin{proof}
Let $\Delta_{n}=S_{n}-\Sigma_{0}$. Using the formulas for $\widetilde{\ell}_{n}$
and $Q$, 
\[
\widetilde{\ell}_{n}(\Sigma)-nQ(\Sigma)=-\frac{n}{2}\tr(\Delta_{n}\Sigma^{-1}).
\]
Hence, by Cauchy--Schwarz in Frobenius norm, 
\[
\left|\widetilde{\ell}_{n}(\Sigma)-nQ(\Sigma)\right|\le\frac{n}{2}\|\Delta_{n}\|_{F}\,\|\Sigma^{-1}\|_{F}.
\]
Since $\Sigma\mapsto\|\Sigma^{-1}\|_{F}$ is continuous on the compact
set $\mathcal{S}$, it is bounded there. The stated inequalities follow
immediately from Lemma~\ref{lem:lil}. 
\end{proof}

\subsection{Likelihood comparisons}

\begin{lemma}[Suboptimal models lose likelihood linearly in $n$]\label{lem:suboptimal}
If $k\notin K_{*}$, then 
\[
T_{k,n}=nV_{k}+o(n)\qquad\text{a.s.},\qquad T_{k,n}-U_{n}=-(V_{*}-V_{k})n+o(n)\qquad\text{a.s.}
\]
\end{lemma}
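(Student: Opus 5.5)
The plan is to apply the uniform approximation of Lemma~\ref{lem:uniform} twice: once on the compact class $\mathcal{F}_{k}$, and once on the compact set $\mathcal{G}_{*}$. Both sets lie in $\mathcal{K}\subset\mathbb{S}_{++}^{p}$ and are compact, by Proposition~\ref{prop:compact} and Assumption~\ref{it:M2}. Lemma~\ref{lem:uniform} gives
\[
\sup_{\Sigma\in\mathcal{F}_{k}}\left|\widetilde{\ell}_{n}(\Sigma)-nQ(\Sigma)\right|=O\left(\sqrt{n\log\log n}\right)=o(n)\qquad\text{a.s.}
\]
The same bound holds with $\mathcal{F}_{k}$ replaced by $\mathcal{G}_{*}$. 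Only finitely many sets are involved, so all these bounds hold simultaneously on a single probability-one event, and I will work on that event.

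For the first claim, I will use the elementary inequality $|\sup_{\mathcal{S}}f-\sup_{\mathcal{S}}g|\le\sup_{\mathcal{S}}|f-g|$, valid for bounded functions on a common set. Applied with $f=\widetilde{\ell}_{n}$, $g=nQ$ and $\mathcal{S}=\mathcal{F}_{k}$, it gives
\[
\left|T_{k,n}-nV_{k}\right|\le\sup_{\Sigma\in\mathcal{F}_{k}}\left|\widetilde{\ell}_{n}(\Sigma)-nQ(\Sigma)\right|=o(n),
\]
since $T_{k,n}=\sup_{\mathcal{F}_{k}}\widetilde{\ell}_{n}$ and $V_{k}=\sup_{\mathcal{F}_{k}}Q$. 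Both suprema are finite by continuity and compactness.

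For the second claim, I apply the same inequality on $\mathcal{G}_{*}$. Every $\Sigma\in\mathcal{G}_{*}$ satisfies $Q(\Sigma)=V_{*}$. To see this, pick $k_{0}\in K_{*}$; then $\mathcal{G}_{*}=\mathcal{G}_{k_{0}}$ by Assumption~\ref{it:M2}, and $Q=V_{k_{0}}=V_{*}$ on $\mathcal{G}_{k_{0}}$. Hence $\sup_{\mathcal{G}_{*}}nQ=nV_{*}$, and therefore $U_{n}=nV_{*}+o(n)$ almost surely. Subtracting this from the first claim gives
\[
T_{k,n}-U_{n}=-(V_{*}-V_{k})n+o(n)\qquad\text{a.s.}
\]
Since $k\notin K_{*}$, we have $V_{*}-V_{k}>0$, which is what makes this a linear loss.

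I do not expect a serious obstacle. The only points needing care are that the null sets from Lemma~\ref{lem:lil} are common to all the finitely many sets, and that $U_{n}$ is well defined, which holds because $\mathcal{G}_{*}$ is nonempty and compact.
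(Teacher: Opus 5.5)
Your proposal is correct and follows the paper's proof: apply Lemma~\ref{lem:uniform} on $\mathcal{F}_{k}$ and on $\mathcal{G}_{*}$ to get $T_{k,n}=nV_{k}+o(n)$ and $U_{n}=nV_{*}+o(n)$ almost surely, then subtract. You also spell out two steps the paper leaves implicit, namely the inequality $|\sup f-\sup g|\le\sup|f-g|$ and the fact that $Q\equiv V_{*}$ on $\mathcal{G}_{*}$ by Assumption~\ref{it:M2}.
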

\begin{proof}
Apply Lemma~\ref{lem:uniform} with $\mathcal{S}=\mathcal{F}_{k}$
and again with $\mathcal{S}=\mathcal{G}_{*}$. Since $Q$ is continuous
and the relevant sets are compact, 
\[
T_{k,n}=nV_{k}+o(n),\qquad U_{n}=nV_{*}+o(n)\qquad\text{a.s.}
\]
Subtracting the two displays yields the result. 
\end{proof}
\begin{lemma}[Likelihood gain from overfitting]\label{lem:exactoverfit}
Take Assumption~\ref{ass:miss} and fix $k\in K_{*}$. Then there
exists an almost surely finite random constant $C_{k}$ such that
\[
T_{k,n}\le U_{n}+C_{k}\log\log n
\]
for all sufficiently large $n$, almost surely. Since $\mathcal{G}_{*}\subset\mathcal{F}_{k}$,
one also has $T_{k,n}\ge U_{n}$, and thus 
\[
T_{k,n}-U_{n}=O(\log\log n)\qquad\text{a.s.}
\]
\end{lemma}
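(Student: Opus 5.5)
The proof will work directly with the decomposition $\widetilde{\ell}_{n}(\Sigma)=nQ(\Sigma)-\frac{n}{2}\tr(\Delta_{n}\Sigma^{-1})$, where $\Delta_{n}=S_{n}-\Sigma_{0}$. The lower bound $T_{k,n}\ge U_{n}$ is immediate, since $\mathcal{G}_{*}=\mathcal{G}_{k}\subset\mathcal{F}_{k}$ by \ref{it:M2}. For the upper bound, let $\widehat{\Sigma}_{n}\in\mathcal{F}_{k}$ attain $T_{k,n}$ (Proposition~\ref{prop:profile}). Let $\Pi_{n}\in\mathcal{G}_{*}$ be a nearest point of $\mathcal{G}_{*}$ to $\widehat{\Sigma}_{n}$, which exists because $\mathcal{G}_{*}$ is compact, and set $\delta_{n}=\dist(\widehat{\Sigma}_{n},\mathcal{G}_{*})=\|\widehat{\Sigma}_{n}-\Pi_{n}\|_{F}$. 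Since $U_{n}\ge\widetilde{\ell}_{n}(\Pi_{n})$ and $Q(\Pi_{n})=V_{*}$, we get
\[
T_{k,n}-U_{n}\le n\{Q(\widehat{\Sigma}_{n})-V_{*}\}-\frac{n}{2}\tr\!\left(\Delta_{n}(\widehat{\Sigma}_{n}^{-1}-\Pi_{n}^{-1})\right).
\]

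First I will show consistency, namely $\delta_{n}\to0$ almost surely. By Lemma~\ref{lem:uniform} applied to $\mathcal{F}_{k}$, together with $T_{k,n}\ge U_{n}=nV_{*}+o(n)$, we obtain $Q(\widehat{\Sigma}_{n})\to V_{*}$. Continuity of $Q$ and compactness of $\mathcal{F}_{k}$ then show that every limit point of $\widehat{\Sigma}_{n}$ lies in $\mathcal{G}_{k}=\mathcal{G}_{*}$. Hence $\delta_{n}<\eta_{k}$ eventually, and the quadratic margin \ref{it:M3} applies:
\[
n\{Q(\widehat{\Sigma}_{n})-V_{*}\}\le-c_{k}n\delta_{n}^{2}.
\]

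Next comes the key localization step. The linear term has to be bounded by something proportional to $\delta_{n}$ rather than by a constant; otherwise we would only recover the crude $\sqrt{n\log\log n}$ rate. The Lipschitz property of the inverse on $\mathcal{K}$ (Proposition~\ref{prop:compact}) gives $\|\widehat{\Sigma}_{n}^{-1}-\Pi_{n}^{-1}\|_{F}\le\underline{\psi}^{-2}\delta_{n}$. Combined with Cauchy--Schwarz and Lemma~\ref{lem:lil}, which gives $\|\Delta_{n}\|_{F}\le B\,r_{n}$ eventually for an a.s. finite random $B$, this yields
\[
T_{k,n}-U_{n}\le-c_{k}n\delta_{n}^{2}+\frac{B}{2\underline{\psi}^{2}}\,n\,r_{n}\,\delta_{n}.
\]
Maximizing the right-hand side over $\delta_{n}\ge0$ gives at most $\frac{B^{2}}{16c_{k}\underline{\psi}^{4}}\,n r_{n}^{2}=C_{k}\log\log n$, with $C_{k}$ almost surely finite.

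I expect the main subtlety to be exactly this step: comparing with the nearest point $\Pi_{n}$, and not with an arbitrary point of $\mathcal{G}_{*}$, so that the fluctuation term scales with $\delta_{n}$ and the margin can absorb it. No parametrization or regularity of $\mathcal{F}_{k}$ is needed; that is why the argument tolerates singular loading geometry. Care is also needed to make the "eventually" statements hold on a single probability-one event, namely the intersection of the events from Lemmas~\ref{lem:lil} and \ref{lem:uniform}.
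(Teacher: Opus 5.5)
Your proposal is correct and follows essentially the same argument as the paper: compare with a nearest point of $\mathcal{G}_{*}$, bound the fluctuation term by $\underline{\psi}^{-2}\|\Delta_{n}\|_{F}\,\dist(\cdot,\mathcal{G}_{*})$ using the Lipschitz inverse and the iterated-logarithm bound, then complete the square against the margin \ref{it:M3}. The only difference is how you handle points far from $\mathcal{G}_{*}$: the paper works uniformly over $\mathcal{F}_{k}$ and uses a separation gap $b_{k}>0$ on the compact set $\{\dist(\Sigma,\mathcal{G}_{*})\ge\eta_{k}\}$, whereas you first show that the maximizer is consistent so that it eventually lies in the near region. Both versions rest on the same compactness, continuity and \ref{it:M2} argument.
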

\begin{proof}
Fix $k\in K_{*}$. For $\Sigma\in\mathcal{F}_{k}$, write $d(\Sigma)=\dist(\Sigma,\mathcal{G}_{*})$
and, because $\mathcal{G}_{*}$ is compact, choose a projection point
$\Pi(\Sigma)\in\mathcal{G}_{*}$ with $\|\Sigma-\Pi(\Sigma)\|_{F}=d(\Sigma)$.
Let $\Delta_{n}=S_{n}-\Sigma_{0}$. Since $U_{n}\ge\widetilde{\ell}_{n}(\Pi(\Sigma))$, 
\[
\widetilde{\ell}_{n}(\Sigma)-U_{n}\le\widetilde{\ell}_{n}(\Sigma)-\widetilde{\ell}_{n}(\Pi(\Sigma)).
\]
Using the identity from Lemma~\ref{lem:uniform}, 
\[
\widetilde{\ell}_{n}(\Sigma)-\widetilde{\ell}_{n}(\Pi(\Sigma))=n\left(Q(\Sigma)-V_{*}\right)-\frac{n}{2}\tr\left(\Delta_{n}(\Sigma^{-1}-\Pi(\Sigma)^{-1})\right).
\]
The inverse map is Lipschitz on $\mathcal{K}$, so 
\[
\|\Sigma^{-1}-\Pi(\Sigma)^{-1}\|_{F}\le\underline{\psi}^{-2}d(\Sigma).
\]
Hence 
\[
\widetilde{\ell}_{n}(\Sigma)-U_{n}\le n\left(Q(\Sigma)-V_{*}\right)+A_{1}\sqrt{n\log\log n}\,d(\Sigma)
\]
eventually almost surely, for some almost surely finite random constant
$A_{1}$.

Now split $\mathcal{F}_{k}$ into a near region $\{d(\Sigma)<\eta_{k}\}$
and a far region $\{d(\Sigma)\ge\eta_{k}\}$. On the near region,
Assumption~\ref{it:M3} implies 
\[
Q(\Sigma)-V_{*}\le-c_{k}d(\Sigma)^{2},
\]
and therefore 
\begin{align*}
\widetilde{\ell}_{n}(\Sigma)-U_{n} & \le-nc_{k}d(\Sigma)^{2}+A_{1}\sqrt{n\log\log n}\,d(\Sigma)\\
 & =-nc_{k}\left(d(\Sigma)-\frac{A_{1}}{2c_{k}}\sqrt{\frac{\log\log n}{n}}\right)^{2}+\frac{A_{1}^{2}}{4c_{k}}\log\log n\\
 & \le\frac{A_{1}^{2}}{4c_{k}}\log\log n.
\end{align*}
On the far region, set 
\[
\mathcal{F}_{k}^{\mathrm{far}}=\{\Sigma\in\mathcal{F}_{k}:d(\Sigma)\ge\eta_{k}\}.
\]
If $\mathcal{F}_{k}^{\mathrm{far}}=\varnothing$, then the near-region
bound already proves the claim. Otherwise, because $d(\cdot)$
is continuous and $\mathcal{F}_{k}$ is compact, the set $\mathcal{F}_{k}^{\mathrm{far}}$
is compact. If $\sup_{\Sigma\in\mathcal{F}_{k}^{\mathrm{far}}}Q(\Sigma)=V_{*}$,
then continuity of $Q$ on $\mathcal{F}_{k}^{\mathrm{far}}$ would
yield some $\Sigma^{\dagger}\in\mathcal{F}_{k}^{\mathrm{far}}$ with
$Q(\Sigma^{\dagger})=V_{*}$. But then $\Sigma^{\dagger}\in\mathcal{G}_{k}=\mathcal{G}_{*}$
by Assumption~\ref{it:M2}, which contradicts $d(\Sigma^{\dagger})\ge\eta_{k}>0$.
Therefore 
\[
b_{k}=V_{*}-\sup_{\Sigma\in\mathcal{F}_{k}^{\mathrm{far}}}Q(\Sigma)>0,
\]
so $Q(\Sigma)\le V_{*}-b_{k}$ whenever $d(\Sigma)\ge\eta_{k}$. Hence
\[
\widetilde{\ell}_{n}(\Sigma)-U_{n}\le-b_{k}n+A_{1}D_{*}\sqrt{n\log\log n},
\]
where $D_{*}=\sup\{\|\Sigma-\Gamma\|_{F}:\Sigma,\Gamma\in\mathcal{K}\}<\infty$.
Because $\sqrt{n\log\log n}=o(n)$, the right-hand side is negative
for all sufficiently large $n$, almost surely. Taking suprema over
the near and far regions shows that 
\[
T_{k,n}\le U_{n}+\frac{A_{1}^{2}}{4c_{k}}\log\log n
\]
eventually almost surely. The lower bound $T_{k,n}\ge U_{n}$ is immediate
from $\mathcal{G}_{*}\subset\mathcal{F}_{k}$. 
\end{proof}

\section{Proofs of the main results}

\subsection{Proof of Theorem~\ref{thm:main}}
\begin{proof}[Proof of Theorem~\ref{thm:main}]
Define the common benchmark 
\[
B_{n}=U_{n}-a_{n}^{*}.
\]
Because every model in $K_{0}$ contains $\mathcal{G}_{*}$, one has
\[
\max_{j\in K_{0}}W_{j,n}\ge B_{n}.
\]
It is therefore enough to prove that every model outside $K_{0}$
eventually has score strictly below $B_{n}$.

If $k\notin K_{*}$, then Lemma~\ref{lem:suboptimal} gives 
\[
T_{k,n}-U_{n}=-(V_{*}-V_{k})n+o(n)\qquad\text{a.s.}
\]
Since $a_{k,n}=o(n)$ by \ref{it:P1} and $a_{n}^{*}=o(n)$ as well,
\[
W_{k,n}-B_{n}=(T_{k,n}-U_{n})-(a_{k,n}-a_{n}^{*})\to-\infty\qquad\text{a.s.}
\]
Thus every suboptimal model is eventually rejected.

Now fix $k\in K_{*}$ with $q_{k}>q_{*}$. Lemma~\ref{lem:exactoverfit}
gives 
\[
T_{k,n}-U_{n}=O(\log\log n)\qquad\text{a.s.}
\]
Therefore 
\[
W_{k,n}-B_{n}=(T_{k,n}-U_{n})-(a_{k,n}-a_{n}^{*})\le C_{k}\log\log n-(a_{k,n}-a_{n}^{*})
\]
eventually almost surely for some almost surely finite random constant
$C_{k}$. By \ref{it:P2}--\ref{it:P3}, the penalty difference dominates
$\log\log n$, so again $W_{k,n}-B_{n}\to-\infty$ almost surely.
Hence no globally optimal model of strictly larger order can be selected
eventually.

We have therefore shown, on an event of probability one, that for
every $k\notin K_{0}$ there exists a finite random index $N_{k}$
such that $W_{k,n}<B_{n}$ for all $n\ge N_{k}$. If $K_{0}=[m]$, then
all candidate models already have order $q_{*}$ and there is nothing
more to prove. Otherwise, because the candidate family is finite, the
maximum 
\[
N=\max\{N_{k}:k\notin K_{0}\}
\]
is finite on that event. For every $n\ge N$, every maximizer of the
penalized scores must therefore belong to $K_{0}$. Since every model
in $K_{0}$ has order exactly $q_{*}$, it follows that 
\[
\widehat{q}_{n}=q_{*}\qquad\text{for all sufficiently large }n\quad\text{a.s.}
\]
This proves strong consistency. 
\end{proof}

\subsection{Proof of Corollary~\ref{cor:bic}}
\begin{proof}[Proof of Corollary~\ref{cor:bic}]
For BIC, take 
\[
a_{k,n}=\frac{1}{2}d_{k}\log n.
\]
Then \ref{it:P1} holds because $\log n=o(n)$. Moreover, 
\[
a_{n}^{*}=\min_{j\in K_{0}}\frac{1}{2}d_{j}\log n=\frac{1}{2}d_{*}\log n.
\]
If $k\in K_{*}$ and $q_{k}>q_{*}$, then by the order-separation
assumption, 
\[
a_{k,n}-a_{n}^{*}=\frac{1}{2}(d_{k}-d_{*})\log n\to\infty,
\]
which is \ref{it:P2}. Also, 
\[
\frac{\log\log n}{a_{k,n}-a_{n}^{*}}=\frac{2\log\log n}{(d_{k}-d_{*})\log n}\to0,
\]
which is \ref{it:P3}. Theorem~\ref{thm:main} therefore yields the
order consistency. The model-class conclusion follows by repeating
the same comparison for models in $K_{0}\setminus K_{**}$, whose
penalty gap relative to $d_{*}$ is again proportional to $\log n$.

\end{proof}

\end{document}